\newtheorem{theorem}{Theorem} [section]
\newtheorem{lemma}[theorem]{Lemma}
\newtheorem{proposition}[theorem]{Proposition}
\theoremstyle{definition}
\DeclareMathOperator{\Law}{Law}
\newcommand{\1}{\mathbbm 1}
\newcommand{\noi}{\noindent}
\newcommand{\Z}{\mathbb{Z}}
\newcommand{\R}{\mathbb{R}}
\newcommand{\C}{\mathbb{C}}
\newcommand{\T}{\mathbb{T}}
\renewcommand{\H}{\mathcal H}
\let\P= \undefined
\newcommand{\P}{\mathbb{P}}
\newcommand{\E}{\mathbb{E}}
\newcommand{\al}{\alpha}
\newcommand{\dl}{\delta}
\newcommand{\nb}{\nabla}
\newcommand{\vp}{\varphi}
\newcommand{\ep}{\varepsilon}
\newcommand{\ld}{\lambda}
\newcommand{\Ld}{\Lambda}
\newcommand{\s}{\sigma}
\newcommand{\ft}{\widehat}
\newcommand{\wt}{\widetilde}
\newcommand{\cj}{\overline}
\renewcommand{\O}{\Omega}
\newcommand{\les}{\lesssim}
\newcommand{\jb}[1]
{\langle #1 \rangle}
\newcommand{\ind}{\mathbbm 1}
\newcommand{\N}{\mathbb{N}}
\newcommand{\ph}{\varphi}
\renewcommand{\H}{\mathcal{H}}
\newtheorem*{ackno}{Acknowledgements}
\newcommand{\eps}{\ep}
\renewcommand{\ft}{\widehat}
\numberwithin{equation}{section}
\numberwithin{theorem}{section}
\newcommand{\HH}{\mathbb H_a}
\newcommand{\id}{\mathrm{id}}
\let\Re=\undefined\DeclareMathOperator*{\Re}{Re}
\title[Log Sobolev inequality for focusing NLS]{A remark on the log-Sobolev inequality for the Gibbs measure of the focusing Schr\"odinger equation}
\author{Guopeng Li, Jiawei Li, Leonardo Tolomeo}
\address{
Guopeng Li, 
School of Mathematics and Statistics, Beijing Institute of Technology, Beijing 100081, China,}
\email{guopeng.li@bit.edu.cn}
\address{
Jiawei Li\\
School of Mathematics\\
The University of Edinburgh\\
and The Maxwell Institute for the Mathematical Sciences\\
James Clerk Maxwell Building\\
The King's Buildings\\
Peter Guthrie Tait Road\\
Edinburgh\\ 
EH9 3FD\\
 United Kingdom\\
}
\email{jiawei.li@ed.ac.uk}
\address{
Leonardo Tolomeo\\
School of Mathematics\\
The University of Edinburgh\\
and The Maxwell Institute for the Mathematical Sciences\\
James Clerk Maxwell Building\\
The King's Buildings\\
Peter Guthrie Tait Road\\
Edinburgh\\ 
EH9 3FD\\
 United Kingdom\\
}
\email{l.tolomeo@ed.ac.uk}
\begin{document}
\subjclass[2020]{28C20, 35Q55, 60H30}

\keywords{Logarithmic Sobolev inequalities, nonlinear Schr\"odinger equation, Gibbs measure.}

\begin{abstract}
 
   We consider the question of showing a log-Sobolev inequality for the Gibbs measure of the focusing Schr\"odinger equation built by Lebowitz-Rose-Speer (1988), formally given by 
    $$ d\rho \propto \exp\big(\frac 1 p\int_\T |u|^p d x - \frac 12\int_\T |\nabla u|^2 d x - \frac 12\int_\T |u|^2 d x\big) \mathbbm 1_{\| u \|_{L^2(\mathbb T)}^2 \le K}dud\overline{u}. $$
    When $2 \le p \le 4$, we show that these measures indeed satisfy a log-Sobolev inequality. When $p> 4$, we show a lower bound for the Hessian of the potential, which implies that the known techniques to show these inequalities cannot apply to the measure $\rho$.

\end{abstract}

\maketitle

\section{Introduction}

In this paper, we consider the focusing Gibbs measures
for the nonlinear Schr\"odinger equations (NLS), 
initiated in the seminal paper by Lebowitz, Rose, and Speer~\cite{Lebowitz1988}.
A Gibbs measure $\rho$
is a probability measure on 
functions\,/\,distributions 
whose density can be \emph{formally} given by
\begin{align}
d\rho = \frac 1 Z e^{- H(u)} d u , 
\label{Gibbs1}
\end{align}
\noindent
where  $H(u)$ is a Hamiltonian functional and $Z$ is a normalisation constant, called partition function. 
In particular, we investigate in the logarithmic
Sobolev inequalities (see \eqref{LSI}) for the focusing 
nonlinear Schr\"odinger equation (NLS) 
on the circle $\T = \R / (2\pi \Z)$:
\begin{align}
i u_t +  \Delta u + |u|^{p-2} u = 0, 
\label{NLS1}
\end{align}
for which the Hamiltonian $H(u)$ is given by 
\begin{align}
H(u) = \frac 12\int_\T |\nabla u|^2 d x - \frac 1 p\int_\T |u|^p d x.
\label{Gibbs2}
\end{align}
 The study of the equation~\eqref{NLS1}
 from the viewpoint of the (non-)equilibrium statistical mechanics 
 has received wide attention; see for example \cite{Lebowitz1988, Bo94, BO96, BO97,Tzv1,  Tzv2, LMW, BBulut, 
CFL, DNY2, Bring3}. See also \cite{BOP4} for a survey on the subject, more from the dynamical point of view.

The main difficulty in constructing the focusing Gibbs measures
comes from the high degree $p>2$ of the negative term in the Hamiltonian \eqref{Gibbs2}.
This makes the problem extremely different from the defocusing case, which would correspond to the well studied $\Phi^p$ model of quantum field theory. In \cite{Lebowitz1988}, Lebowitz, Rose, and Speer suggested
to consider the Gibbs measure with an extra $L^2$-cutoff 
\begin{align}
d\rho = \frac{1}{Z_K} e^{- H(u)} \1_{\{M(u) \le K\}} d u,
\label{Gibbs3}
\end{align}
where $M(u)$ denotes the mass functional 
\begin{equation}
M(u) = \int_\T |u|^2 dx. \label{mass}
\end{equation}
This choice is suitable for the study of the statistical mechanics of NLS, since the mass $M(u)$ is conserved by the flow of NLS, and represents the ``generalised number of particles". 
In a series of papers \cite{Lebowitz1988, Bo94, OST, TolWeb}, the authors showed the following.
\begin{proposition}
    Consider the Gaussian measure 
    \begin{equation}\label{baseGM}
        d\mu \propto \exp\Big( - \frac 12 \int_\T |u|^2 dx - \frac 12 \int_\T |\nabla u|^2 dx  \Big)dud\cj u.
    \end{equation}
    Then the measure $\rho$ in \eqref{Gibbs3} is well-defined in the following cases: 
    \begin{enumerate}
        \item $ p < 6$, $K \ge 0$, 
        \item $ p = 6$, and $K \le \|Q \|_{L^2(\R)}^2$, where $Q$ is the ground state of the following elliptic equation on $\R$
        $$ \Delta Q +  Q^5 + 2 Q = 0. $$
    \end{enumerate}
    Moreover, we have that in all the cases above, the Radon-Nikodym derivative $f = \frac{d \rho}{d \mu}$ satisfies 
    \begin{enumerate}
        \item If $ p < 6$, $K \ge 0$, $f \in L^q(\mu)$ for every $q <\infty $,
        \item If $ p = 6$, and $K \le \|Q \|_{L^2}^2$, then $f \in L^q(\mu)$ for every 
        $$q \le \Big(\frac{\|Q\|_{L^2}^2}{K}\Big)^2, $$
        \item If $p =6$, then for every $\al < 1$, $f \in L (\log L)^\al $, \cite[Proposition 6]{HoNi}. 
    \end{enumerate}
\end{proposition}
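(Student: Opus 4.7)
The plan is a standard truncation-and-limit argument. Let $u_N = P_{\le N}u$ denote the projection onto Fourier modes $|n| \le N$, and define the unnormalized regularized densities
\[ f_N(u) := \exp\Big(\tfrac{1}{p}\|u_N\|_{L^p(\T)}^p\Big)\mathbbm{1}_{\{M(u_N) \le K\}}. \]
Since $u \in H^s(\T)$ for every $s < 1/2$ holds $\mu$-almost surely, and the 1D Sobolev embedding $H^s \hookrightarrow L^p$ is valid for $s > \tfrac12 - \tfrac1p$, one obtains $\|u_N\|_{L^p} \to \|u\|_{L^p}$ and $M(u_N) \to M(u)$ in every $L^r(\mu)$, $r < \infty$. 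In particular $f_N \to f$ $\mu$-a.s., so by Fatou's lemma it suffices to establish uniform-in-$N$ bounds on $\int f_N^q d\mu$ in the prescribed ranges of $q$; the well-definedness of $\rho$ and the $L^q$-integrability of $f$ then follow immediately (positivity $Z > 0$ being trivial from strict positivity of $f$ on $\{\|u\|_{L^2}^2 \le K/2\}$).

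For the uniform bound, the main tool is the sharp 1D Gagliardo--Nirenberg inequality
\[ \|v\|_{L^p}^p \le C_p \|v\|_{L^2}^{(p+2)/2}\|\partial_x v\|_{L^2}^{(p-2)/2}, \]
with $C_p$ the Weinstein sharp constant; at $p = 6$ one has $C_6 = 3\|Q\|_{L^2(\R)}^{-4}$. Applied to $v = u_N$ on the support of $\{M(u_N) \le K\}$ and multiplied by $q/p$, this gives
\[ \tfrac{q}{p}\|u_N\|_{L^p}^p \le \tfrac{qC_pK^{(p+2)/4}}{p}\|\partial_x u_N\|_{L^2}^{(p-2)/2}. \]
For $p < 6$, the exponent $(p-2)/2$ is strictly less than $2$, so Young's inequality yields $\tfrac{q}{p}\|u_N\|_{L^p}^p \le \varepsilon\|\partial_x u_N\|_{L^2}^2 + C_\varepsilon$ with $\varepsilon > 0$ as small as desired. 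For $p = 6$ and $q < (\|Q\|_{L^2}^2/K)^2$, the exponent equals $2$ exactly, and the prefactor equals $\tfrac{qK^2}{2\|Q\|_{L^2}^4} < \tfrac12$. In either case, the coefficient in front of $\|\partial_x u_N\|_{L^2}^2$ is strictly smaller than $1/2$, leaving slack to be absorbed into the Gaussian weight of $\mu$.

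The main obstacle is precisely this absorption step, which must be handled with care at the infinite-dimensional level: a naive pull-out-and-compare-to-Lebesgue use of Young's inequality inflates the reweighted partition function by a factor that worsens with $N$. The cleanest route is the Bou\'e--Dupuis variational representation, which recasts the exponential moment as an infimum over drifts and allows one to balance the $L^p$ energy gain against the Gaussian cost directly, while keeping all manipulations at the Cameron--Martin level where the $L^2$-cutoff and the sharp GN estimate combine cleanly. The truly borderline case $p = 6$ at $q = (\|Q\|_{L^2}^2/K)^2$ (in particular the well-definedness of $\rho$ at $K = \|Q\|_{L^2}^2$ via $q = 1$) lies beyond sharp Gagliardo--Nirenberg alone---the inequality is saturated there---and is where statement (3) enters, via the refined $L\log L^\alpha$ analysis of \cite{HoNi} exploiting the classification of near-maximizers of the Gagliardo--Nirenberg inequality together with delicate Gaussian concentration.
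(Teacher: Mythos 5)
The paper itself does \emph{not} prove this proposition: it is stated as a summary of results from the literature, attributed to \cite{Lebowitz1988, Bo94, OST, TolWeb} for parts (1)--(2) and to \cite[Proposition 6]{HoNi} for part (3). So there is no in-paper argument against which to compare yours.

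As a free-standing sketch, yours correctly identifies the overall architecture (Fourier truncation, sharp 1D Gagliardo--Nirenberg on the support of the cutoff, and then an absorption step), and, crucially, you correctly flag that the naive absorption fails: $\E_\mu \exp(\varepsilon \|\partial_x P_N u\|_{L^2}^2)$ blows up like $(1-2\varepsilon)^{-N}$, so the argument cannot be closed by dominating by a reweighted Gaussian. But from that point on, the proposal is an outline rather than a proof. Saying ``use Bou\'e--Dupuis'' is where the argument actually \emph{begins} in \cite{TolWeb, OST} (and the original route in \cite{Lebowitz1988, Bo94} is different still), and carrying it out requires nontrivial work: one must introduce a drift, use the mass cutoff to control the low-frequency part of the drift, and run a careful bookkeeping to make the $\frac1p\|\cdot\|_{L^p}^p$ gain strictly smaller than the Cameron--Martin cost uniformly in $N$. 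None of that is done here. Two further concrete issues: (i) the Gagliardo--Nirenberg inequality you quote is the one on $\R$; on $\T$ it requires a lower-order correction (constants make the right-hand side vanish), and getting the sharp constant $3\|Q\|_{L^2(\R)}^{-4}$ to survive on the torus is itself a nontrivial concentration argument. (ii) The endpoint $p=6$, $q = (\|Q\|_{L^2}^2/K)^2$ (in particular the well-definedness at $K = \|Q\|_{L^2}^2$) is the main theorem of \cite{OST}, not of \cite{HoNi}; the latter only gives the $L(\log L)^\alpha$ refinement in part (3). As written, your sketch misattributes the hard endpoint and defers the central analytical step, so it does not constitute a proof of the statement.
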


Here, we focus our attention on the logarithmic Sobolev inequality (LSI) for the Gibbs measures with mass-cutoff. Namely, we ask whether the measure
$\rho$ in \eqref{Gibbs3} satisfies an inequality of the form 
\begin{equation}\label{LSI}
    \int |F(u)|^2 \log \Big( \frac{|F(u)|^2}{\int |F(u)|^2 \, d \rho(u)}\Big) d \rho(u) 
    \;\le\; C \int \| \nabla F(u) \|_{L^2}^2 \, d \rho(u),
\end{equation}
for every suitable smooth functional $F : L^2(\T) \to \R$ and some constant $C>0$.

When such an inequality \eqref{LSI} holds, we define
\begin{equation}\label{df-lsi}
    \mathrm{LS}(\rho)
    := \inf\Big\{ C>0 \,:\, \eqref{LSI} \text{ holds for all admissible } F \Big\},
\end{equation}
that is, $\mathrm{LS}(\rho)$ is the best (smallest) constant for which \eqref{LSI} is valid.
If there is no finite constant $C$ for which \eqref{LSI} holds, we set
\begin{equation}\label{df-nlsi}
    \mathrm{LS}(\rho) := \infty.
\end{equation}

Logarithmic Sobolev inequalities \eqref{LSI} are important 
functional inequalities firstly established by Gross in 
\cite{Gross}
for Gaussian measures, where its equivalence to hypercontractivity was also shown. LSI is a powerful tool in showing the uniqueness of invariant measure and exponential rate of convergence of the Markov semigroup towards the invariant measure in models from statistical mechanics and quantum field theory. 
See \cite{BGL, GZ} for comprehensive reviews.

In \cite{BaEm}, Bakry and \'Emery provide a nice sufficient condition for LSI under log-concave measures. 
For a Gibbs measure $\rho$ as in \eqref{Gibbs1}, 
Bakry-\'Emery criterion 
(see Proposition~\ref{prop:BaEm}) implies 
LSI holds when the Hamiltonian $H$ is strictly 
convex. For applications of Bakry-\'Emery criterion, 
see \cite{CS,Ledoux, GZ}.

In the context of Gibbs measures for NLS, \cite{BlowerBD} was the first paper proving an LSI. They applied 
the Bakry-\'Emery criterion 
to the Gibbs measures for focusing NLS when $2\le p\le 4$. We however point out that in \cite{BlowerBD}, the authors seem to overlook the complications deriving from the presence of the cutoff function,\footnote{We remark that if the cutoff function in \eqref{Gibbs3} is instead on a non-convex subset of the ball, then the log-Sobolev inequality \eqref{LSI} might fail (counterexamples can be constructed easily when the set is disconnected and $F$ is constant on the connected components), but the argument in \cite{BlowerBD} cannot distinguish the two situations.} so 
for completeness of exposition, we revisit this result and establish the following LSI.





Our first main result is the following.

\begin{theorem}\label{thm:LS}
Let $2 \le p \le 4$ and $K>0$, and let $\rho$ be the focusing Gibbs measure
with $L^2$-cutoff at level $K$ given in \eqref{Gibbs3}, associated with the
nonlinear Schr\"odinger equation \eqref{NLS1} on $\T$.
Then $\rho$ satisfies a logarithmic Sobolev inequality of the form
\eqref{LSI}. In particular,
\[
   \mathrm{LS}(\rho) < \infty,
\]

\noi
in the sense of 
definition
\eqref{df-lsi}.

\end{theorem}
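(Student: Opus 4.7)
My plan is to apply the Bakry--\'Emery criterion of Proposition~\ref{prop:BaEm} in its form valid for log-concave measures supported on convex subsets of $L^2(\T)$. Writing
\[
d\rho \;\propto\; e^{-V(u)}\mathbf{1}_{\{M(u)\le K\}}\,du\,d\bar u, \qquad V(u) := \tfrac12\|u\|_{H^1(\T)}^2 - \tfrac1p\|u\|_{L^p(\T)}^p,
\]
and exploiting the convexity of $\{M(u) \le K\} \subset L^2(\T)$, it suffices to establish a uniform lower bound of the form
\[
\mathrm{Hess}\,V(u) \;\ge\; \ld\,\mathrm{Id}_{L^2} \qquad \text{for all $u$ with $\|u\|_{L^2}^2 \le K$,}
\]
with some $\ld = \ld(p,K)>0$; the LSI~\eqref{LSI} then holds with constant $2/\ld$.

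The Hessian of $V$ along $h \in H^1(\T)$ is, schematically,
\[
\mathrm{Hess}\,V(u)[h,h] \;=\; \|h\|_{H^1}^2 \;-\; c_p\int_{\T}|u|^{p-2}|h|^2\,dx,
\]
with $c_p>0$ depending only on $p$. For $p=2$ convexity is trivial. For $2<p\le 4$ I would estimate the negative term by combining H\"older's inequality (using the $L^2$ cutoff) with the one-dimensional Gagliardo--Nirenberg inequality: with $q = 4/(4-p)$ and $\theta = (p-2)/4$,
\[
\int_\T|u|^{p-2}|h|^2\,dx \;\le\; \|u\|_{L^2}^{p-2}\|h\|_{L^q}^2 \;\le\; K^{(p-2)/2}\,C_{\mathrm{GN}}\,\|h\|_{H^1}^{2\theta}\,\|h\|_{L^2}^{2(1-\theta)}.
\]
Since $p\le 4$ ensures $\theta \le 1/2 < 1$, Young's inequality allows a fraction of $\|h\|_{H^1}^2$ to be absorbed into the positive term, leaving a residual bounded by a (possibly $K$-dependent) multiple of $\|h\|_{L^2}^2$. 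The range $p\le 4$ is precisely the threshold where this absorption is possible at the linear level; for $p>4$, $\theta = 1$ and the Young step fails.

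The indicator $\mathbf{1}_{\{M\le K\}}$, which is the point overlooked in \cite{BlowerBD}, is treated by approximating it with a smooth log-concave cutoff $\chi_\ep(M(u))$---for instance $\chi_\ep(x) = \exp\bigl(-\ep^{-1}(x-K)_+^2\bigr)$, whose logarithm is a concave, nonincreasing function of $M(u)$. Each smoothed measure $d\rho_\ep \propto e^{-V}\chi_\ep(M)\,du\,d\bar u$ is then genuinely log-concave with Hessian inheriting the same lower bound, so Bakry--\'Emery yields the LSI for $\rho_\ep$ uniformly in $\ep$. Passing to the limit $\ep\to 0^+$, using the weak convergence $\rho_\ep \wto \rho$ together with the lower semicontinuity of the LSI constant under weak convergence of probability measures (and dominated convergence for the partition function, which stays bounded below by $Z_K>0$), produces the desired LSI for $\rho$.

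The main technical obstacle is the Hessian lower bound itself. The naive estimate $\int|u|^{p-2}|h|^2\le \|u\|_{L^\infty}^{p-2}\|h\|_{L^2}^2$ is useless because the $L^2$-cutoff provides no pointwise control on $u$; only by transferring regularity onto the test direction $h$ via the Gagliardo--Nirenberg absorption described above can one recover a bound of the right form. This is exactly the mechanism by which the restriction $p\le 4$ enters the argument, and it dovetails with the lower bound on $\mathrm{Hess}\,V$ for $p>4$ established in the second half of the paper. A careful bookkeeping of the constants in the GN/Young step, and the precise interaction of the smoothed indicator with the Bakry--\'Emery theorem on convex domains, are where the subtleties of the argument concentrate.
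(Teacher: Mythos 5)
Your overall strategy---replace the sharp indicator by a smooth log-concave penalty, apply Bakry--\'Emery in the approximating problem, and pass to the limit---is the same architecture as the paper's proof, and your Gagliardo--Nirenberg absorption is in spirit the same estimate the paper uses (the paper writes it as $|u|^{p-2}\le C(1+|u|^2)$ followed by Sobolev and Young, which is the more transparent formulation). However, there are two genuine gaps in the execution, and one incorrect piece of reasoning about the role of $p=4$.

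The first gap is that your smoothed cutoff $\chi_\ep(M)=\exp\bigl(-\ep^{-1}(M-K)_+^2\bigr)$ is too weak to make the approximating measures uniformly log-concave. The Hessian of $\ep^{-1}(M(u)-K)_+^2$ contributes only $\ep^{-1}\bigl(8(M-K)_+\,\mathrm{id}+16\,\mathbf 1_{M>K}\,u\otimes u\bigr)$, i.e.\ a term that grows \emph{linearly} in $\|u\|_{L^2}^2$ as $\|u\|_{L^2}\to\infty$. But after the Gagliardo--Nirenberg/Young absorption, the negative contribution of $-\tfrac1p\|u\|_{L^p}^p$ to the Hessian of $V$ is only controlled by something of size $(1+\|u\|_{L^2}^2)^4\,\mathrm{id}$. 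So on the overshoot region $\{\|u\|_{L^2}^2>K\}$, which the smoothed measure charges, the quadratic penalty loses to the quartic growth for $\|u\|_{L^2}$ large, for every fixed $\ep$. Your claim that ``each smoothed measure is genuinely log-concave with Hessian inheriting the same lower bound'' is therefore false. This is precisely why the paper chooses the penalty $R(\|u\|_{L^2}^2-K)_+^8$: its Hessian contains $16R(\|u\|_{L^2}^2-K)_+^7\,\mathrm{id}$, and a seventh power beats the fourth power uniformly, yielding the finite constant $\Lambda_*$ in the inequality $(\|u\|_{L^2}^2-K)_+^7-C(1+\|u\|_{L^2}^2)^4\ge-\Lambda_*$.

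The second gap is that even on the ball $\{\|u\|_{L^2}^2\le K\}$ the lower bound you extract from Gagliardo--Nirenberg plus Young is not automatically a \emph{positive} multiple of the identity. What comes out is $(1-\ep)\|h\|_{H^1}^2-C_\ep\|h\|_{L^2}^2$, and since $C_\ep$ grows with $K$, this quadratic form is indefinite as soon as $C_\ep>1$. The paper fixes this by first replacing $\rho$ with $\rho_\Lambda\propto e^{-\Lambda\|u\|_{L^2}^2}\,d\rho$, using the fact that on the support the density change is bounded above and below (a Holley--Stroock type equivalence, \eqref{LSequivalence}), and then choosing $\Lambda$ large enough to absorb $\Lambda_*$ and the $C_\ep$-type constants, producing $\mathcal H H\ge\mathrm{id}$. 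Without some such device, your $\lambda(p,K)>0$ is unjustified, and the Bakry--\'Emery constant $2/\lambda$ is meaningless.

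Finally, a smaller point: your explanation of why the argument stops at $p=4$ is wrong. With $\theta=(p-2)/4$, one has $\theta<1$ for all $p<6$, so the Young step does not fail at $p=4$ as you assert. The actual obstruction in your H\"older step is that writing $\int|u|^{p-2}|h|^2\le\|u\|_{L^2}^{p-2}\|h\|_{L^q}^2$ requires the H\"older conjugate exponent $q=4/(4-p)$ to be nonnegative, i.e.\ $p\le4$; equivalently, one needs $p-2\le 2$ so that the $L^2$ cutoff (rather than some higher $L^r$ norm of $u$) suffices, which is exactly the inequality $|u|^{p-2}\le C(1+|u|^2)$ used in the paper. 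The threshold $p=4$ comes from the cutoff being an $L^2$ constraint, not from an endpoint in Young's inequality.

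The remaining ingredient in the paper that you do not address explicitly---the reduction to finite-dimensional projections $P_N$ and the passage to the limit in total variation (Lemma~\ref{NRconvergenceweak})---is more robust than weak convergence plus ``lower semicontinuity of the LSI constant,'' which is a delicate claim on its own; but this is secondary to the two gaps above.
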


When $p > 4$, the situation is more complex. 
In general, establishing LSI beyond
the strict convexity assumption on 
$H$ is challenging, especially on infinite-dimensional spaces. 
In recent years, there have been several breakthroughs by Bauerschmidt, Bodineau, and collaborators, see 
\cite{BB1, BB2, BBD}, that culminated with the proof of uniqueness for the $\ph^4_2$ measure in high temperature in \cite{BDW}. These results have been obtained via the use of a ``multiscale Bakry-\'Emery formula", \cite[Theorem 2.5]{BB2}, that we now describe in the particular case of the measure $\rho$. 
Define the family of Gaussian measures $\bar \mu_t$ \emph{formally} given by 
\begin{align*}
    d\bar \mu_t (u)
    \propto 
    \exp\Big(-\frac 12 \jb{u, \frac{\jb{\nb}^2}{1-e^{-t \jb{\nb}^2}}u}\Big) dud\cj u. 
\end{align*}
Note that if $u$ is a random variable with $\Law(u) = \mu$ in \eqref{baseGM}, then 
\begin{equation}
    \Law( \sqrt{1 - e^{-t\jb{\nabla}^2}} u) = \bar \mu_t. 
\end{equation}
We then set 
\begin{align*}
    V_t(\vp)
    =  \log\Big(\E_{\bar \mu_t} \Big[ \exp\Big(\frac 1p \int_\T |u+ \ph|^p dx\Big)\1_{ \| u + \ph \|_{L^2}^2 \le K} \Big]\Big)
\end{align*}
for every $\vp\in H^1$.
Theorem~2.5 in \cite{BB2} states
that under some suitable 
ergodicity assumption, 
if for every $t\ge 0$ there exists a differentiable map $t \mapsto \ld_t \in \R$ such that 
$$ \int_0^\infty e^{-2 \ld_t} dt < \infty $$

\noi
and the Hessian of $ V_t(\vp)$
denoted by
\begin{align}
    \mathcal{H}(V_t)(\vp)[e^{-\frac{t\jb{\nb}^2}{2}}w, e^{-\frac{t\jb{\nb}^2}{2}}w] \le  \frac 12 \| w \|_{H^1}^2 
    - \dot \ld_t \| w \|_{L^2}^2
    \label{BB}
\end{align}
for all $w\in H^1$,
then LSI holds.

Instead of working with 
\eqref{BB}, we consider the functional 
\begin{equation}\label{Vdef}
    V(\ph) = \log\Big(\E_\mu \Big[ \exp\Big(\frac 1p \int_\T |u+ \ph|^p dx\Big)\1_{ \| u + \ph \|_{L^2}^2 \le K} \Big]\Big),
\end{equation}
 where $\mu$ is as in \eqref{baseGM}. 
Note that, abusing of notation, we have $V = V_\infty$.

By taking $t$ large in \eqref{BB}, by boundedness of the operator $\sqrt{1-e^{-t\jb{\nb}^2}}$ and its inverse, 
due to the integrability condition for $\ld_t$, in order to apply this criterion we essentially require that, for $w \in H^1$,
\begin{equation}
    \sup_{\ph \in H^1} \mathcal{H}(V)(\ph)[w,w] < \infty.
\end{equation}

 We define the probability measure 
\begin{equation} \label{rhophdef}
    d\rho_\ph(u) = \frac{1}{Z_\ph} \exp\Big(\frac 1p \int_\T |u+ \ph|^p dx\Big)\1_{ \| u + \ph \|_{L^2}^2 \le K} d\mu(u), 
\end{equation}
where $Z_\ph$ is an appropriate normalisation constant.

The second main result of this paper is the following.



\begin{theorem}\label{thm:noLS}
Let $4 < p < 6$ and let $V$ be defined by \eqref{Vdef}, with
$\rho_\varphi$ the probability measures given in \eqref{rhophdef}.
Then the following hold:

\smallskip
\noindent
{\rm (i)} For every $\varphi \in H^1(\T)$ and every $w \in H^1(\T)$,
\begin{equation}\label{HessianLowerBound}
    \mathcal{H}V(\varphi)[w,w]
    \;\ge\;
    (p-1)\int_{\T} \E_{\rho_\varphi}\big[\,|u(x)|^{p-2}\,\big]\,
    |w(x)|^2\,dx.
\end{equation}

\smallskip
\noindent
{\rm (ii)} We have
\begin{equation}\label{Lp-2divergence}
    \sup_{\varphi \in H^1(\T)}
    \E_{\rho_\varphi}\Big[\int_{\T} |u(x)|^{p-2}\,dx\Big]
    \;=\; \infty.
\end{equation}
In particular, for every $w \in H^1(\T)\setminus\{0\}$,
\begin{equation}\label{NoHope}
    \sup_{\varphi \in H^1(\T)} \mathcal{H}V(\varphi)[w,w]
    \;=\; \infty.
\end{equation}

\end{theorem}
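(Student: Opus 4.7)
The plan for part (i) is to express $\mathcal{H}V(\varphi)[w,w]$ as a variance under $\rho_\varphi$ and then bound this variance via Gaussian integration by parts. A Cameron--Martin translation in the $\mu$-integral defining $V(\varphi+sw)$ (substituting the integration variable by $u\mapsto u-sw$, and noting that since $F(u,\varphi) := \tfrac{1}{p}\int|u+\varphi|^p\,dx$ and the cutoff depend only on $u+\varphi$, both are invariant under the compensating shift, so only the Cameron--Martin Jacobian survives) yields the identity
\[
V(\varphi+sw) \;=\; V(\varphi) \;-\; \frac{s^{2}}{2}\|w\|_{H^{1}}^{2} \;+\; \log \E_{\rho_{\varphi}}\bigl[e^{s\langle u,w\rangle_{H^{1}}}\bigr].
\]
Reading off the coefficient of $s^{2}$ from the cumulant expansion of the log-MGF on the right gives the clean formula
\[
\mathcal{H}V(\varphi)[w,w] \;=\; \mathrm{Var}_{\rho_{\varphi}}\!\bigl(\langle u,w\rangle_{H^{1}}\bigr) \;-\; \|w\|_{H^{1}}^{2}.
\]

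Next I apply Gaussian integration by parts under $\mu$ twice (using $\E_\mu[\langle u,w\rangle_{H^1} G(u)] = \E_\mu[D_w G(u)]$, with $D_w$ the directional Malliavin derivative) to $\E_{\rho_\varphi}[\langle u,w\rangle_{H^1}^2]$. This yields an expression of the form $\mathrm{Var}_{\rho_\varphi}(\langle u,w\rangle_{H^1}) = \|w\|_{H^1}^2 + \E_{\rho_\varphi}[\mathcal{H}_\varphi F[w,w]] + \mathrm{Var}_{\rho_\varphi}(D_w F) + (\text{cutoff contributions})$. The pointwise Hessian $\mathcal{H}_\varphi F$ is computed from the scalar convexity $\tfrac{d^2}{d\varphi^2}\tfrac{|u+\varphi|^p}{p} = (p-1)|u+\varphi|^{p-2}$, and, combined with the nonnegative variance of $D_w F$, yields the claimed lower bound involving $(p-1)|u|^{p-2}|w|^2$. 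The cutoff $\chi = \1_{\|u+\varphi\|_{L^2}^2\le K}$ is handled by smoothing $\chi \approx e^{-\psi_\eps(\|u+\varphi\|_{L^2}^2)}$ with $\psi_\eps$ convex nondecreasing: convexity of the $L^2$-norm together with the convexity/monotonicity of $\psi_\eps$ makes $\log\chi_\eps$ concave in the field $u+\varphi$, so the cutoff corrections to the IBP formula have a favorable (nonnegative) sign, and the limit $\eps \to 0$ preserves inequality \eqref{HessianLowerBound}.

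For part (ii), the divergence \eqref{Lp-2divergence} will be established by an explicit construction: take $\varphi_\lambda := \lambda\psi$ for a fixed nonzero $\psi \in H^1(\T)$ (e.g., $\psi \equiv 1$) and let $\lambda \to \infty$. The change of variable $v := u+\varphi_\lambda$ rewrites $\rho_{\varphi_\lambda}$ as the pushforward under $v\mapsto v-\varphi_\lambda$ of a measure $\sigma_{\varphi_\lambda}$ on $v$ with density proportional to $e^{\tfrac 1p\int|v|^p + \langle v,\varphi_\lambda\rangle_{H^1}}\1_{\|v\|_{L^2}^2\le K}$ with respect to $\mu$. For $\lambda \gg 1$ the linear term $\lambda\langle v,\psi\rangle_{H^1}$ dominates the exponent, and a Laplace/saddle-point analysis shows that $\sigma_{\varphi_\lambda}$ concentrates exponentially near the (fixed, $O(1)$) maximizer $v^{*}$ of $\langle\cdot,\psi\rangle_{H^1}$ on the $L^2$-ball of radius $\sqrt K$. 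Consequently $u = v - \varphi_\lambda \approx -\lambda\psi$ with high probability, giving
\[
\E_{\rho_{\varphi_\lambda}}\!\Big[\int_{\T}|u(x)|^{p-2}\,dx\Big] \;\gtrsim\; \lambda^{p-2}\int_{\T}|\psi|^{p-2}\,dx \;\longrightarrow\;\infty.
\]
For \eqref{NoHope}, combining with part (i) and taking $\psi \equiv 1$ gives $\int_\T|\psi|^{p-2}|w|^2\,dx = \|w\|_{L^2}^2 > 0$ for any nonzero $w \in H^1(\T)$, so $\mathcal{H}V(\varphi_\lambda)[w,w]\gtrsim \lambda^{p-2}\|w\|_{L^2}^2 \to \infty$.

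The main technical obstacle lies in part (i): the careful treatment of the mass cutoff --- showing that the smoothing/limit argument preserves the favorable sign of the boundary contributions --- together with the precise bookkeeping of the pointwise Hessian and variance terms needed to extract the clean form $(p-1)|u|^{p-2}|w|^2$ of the lower bound. In part (ii), the Laplace/saddle-point asymptotics for the partition function $Z_{\varphi_\lambda}$ are technically involved but essentially standard.
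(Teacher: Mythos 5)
Your reformulation of part (i) via Cameron--Martin,
\[
V(\varphi+sw)=V(\varphi)-\tfrac{s^2}{2}\|w\|_{H^1}^2+\log\E_{\rho_\varphi}\bigl[e^{s\langle u,w\rangle_{H^1}}\bigr],
\qquad
\mathcal H V(\varphi)[w,w]=\mathrm{Var}_{\rho_\varphi}\bigl(\langle u,w\rangle_{H^1}\bigr)-\|w\|_{H^1}^2,
\]
is correct and elegant, and the two applications of Gaussian IBP do give
$\mathcal H V=\E_{\rho_\varphi}[D_w^2\widetilde F]+\mathrm{Var}_{\rho_\varphi}(D_w\widetilde F)$
for the full potential $\widetilde F$ including a smoothed cutoff. (The paper instead reaches the same conclusion by differentiating $\log\E_\mu[e^{\widetilde F}]$ directly and invoking Jensen for the variance term; the two routes are essentially equivalent in content.) However, there is a genuine sign error in your treatment of the cutoff. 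Writing $\chi_\eps=e^{-\psi_\eps(\|u+\varphi\|_{L^2}^2)}$ with $\psi_\eps$ convex nondecreasing, $\log\chi_\eps$ is indeed concave in $u+\varphi$, which means $\mathcal H(\log\chi_\eps)[w,w]=-\psi_\eps''\cdot 4\langle u+\varphi,w\rangle_{L^2}^2-2\psi_\eps'\|w\|_{L^2}^2\le 0$. This contributes a \emph{nonpositive} term to $\E_{\rho^\eps}[D_w^2\widetilde F]$, i.e.\ it \emph{worsens} the lower bound rather than helping it --- you cannot simply drop it. One must show that the penalty $\E_{\rho^\eps}[\mathcal H\psi_\eps[w,w]]$ vanishes as the smoothing sharpens, which is exactly the technical crux of part (i). The paper accomplishes this by the specific choice $\psi_R(x)=R(x-K)_+^\sigma$ with $\sigma>\frac p2+1$, so that $\mathcal H\psi_R$ vanishes on $\{\|u+\varphi\|_{L^2}^2\le K\}$ and is killed by the factor $e^{-R(\cdot-K)_+^\sigma}$ elsewhere, combined with a dominated-convergence argument as $R\to\infty$; with a generic $\psi_\eps$ the penalty need not tend to zero. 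So as written the proof of (i) has a gap precisely at the point you flag as ``the main technical obstacle.''

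For part (ii) your approach is sound and genuinely different from the paper's, which deploys the Bou\'e--Dupuis variational formula (Lemmas \ref{LEM:var3}, \ref{optTVconvergence}, \ref{lem:bigLp-2}) together with a rescaled bump $\varphi_M=M^{1/2}\varphi_1(Mx)$ satisfying $\|\varphi_M\|_{L^2}^2\le K/2$, extracting the slower rate $M^{\frac p2-2}$ from a contradiction argument in the drift. Your observation that taking $\|\varphi\|_{L^2}\to\infty$ works is correct, and in fact the Laplace/saddle-point analysis you sketch is unnecessary: since $\rho_\varphi$ is supported on $\{\|u+\varphi\|_{L^2}^2\le K\}$, one has the deterministic bound $\|u\|_{L^2}\ge\|\varphi\|_{L^2}-\sqrt K$, and because $dx$ is normalised and $p-2>2$, H\"older (Jensen) gives $\int_\T|u|^{p-2}dx\ge\|u\|_{L^2}^{p-2}\ge(\|\varphi\|_{L^2}-\sqrt K)_+^{p-2}$, which already proves \eqref{Lp-2divergence} by sending $\|\varphi\|_{L^2}\to\infty$. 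One minor remark affecting both your write-up and \eqref{HessianLowerBound}: for complex-valued $u$, the Hessian of $z\mapsto\frac1p|z|^p$ as a quadratic form on $w\in\C$ equals $|z|^{p-2}|w|^2+(p-2)|z|^{p-4}(\Re(\bar z w))^2$, whose smallest eigenvalue is $|z|^{p-2}$, not $(p-1)|z|^{p-2}$; the ``scalar convexity'' computation only gives the largest eigenvalue, so the stated lower bound should carry constant $1$ rather than $p-1$ (this does not affect the qualitative conclusions (ii) and \eqref{NoHope}).
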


Theorem \ref{thm:noLS}
tells us, when $4<p<6$, the Hessian $\mathcal{H}V(\varphi)$
cannot be uniformly bounded from above on $H^1(\T)$ in any fixed
direction $w\neq 0$, and thus the multiscale Bakry-\'Emery criterion
from~\cite{BB2} cannot be used to establish an LSI for the associated
Gibbs measure.

\section{Preliminaries}

Throughout the paper we work with complex-valued functions
$u:\T\to\C$ and view $L^2(\T;\C)=L^2(\T)$ as a real Hilbert space with scalar
product
\[
 \langle f,g\rangle := \Re\int_{\T} f(x)\,\overline{g(x)}\,dx.
\]

\noi
In particular, $\|u\|_{L^2}^2 = \langle u,u\rangle$ and all gradients
and Hessians are taken with respect to this real structure.
Whenever we write $\langle\cdot,\cdot\rangle_{L^2}$ below, it is this
(real) scalar product.

We use $C>0$ to denote various constants, which may vary line by line. We also use $A\les B$ to denote an estimate of the form $A\leq CB$ for some constant $C>0$. We denote by $dx$ the Lebesgue measure on the torus, normalised so that $\int_\T dx = 1$.

Given $N \in \N$, 
we denote by  $P_N$
 the projector onto the frequencies 
$\{|n| \leq N\}$
defined by 
\begin{align*}
 \ft{P_N f}(n)= \ind_{\{|n|\leq N\}} \ft f(n).
\end{align*}
We remark that due to the boundedness of the Hilbert transform on the torus, we have that 
\begin{align}\label{PNconvergence}
    \| P_N f - f \|_{L^p} \to 0 
\end{align}
as $N\to \infty$, for every $f \in L^p$, $1<p<\infty$.





Finally, we set $x_+ = \max\{x,0\}$.

To show Theorem~\ref{thm:LS}, 
we will use the following celebrated criterion.
\begin{proposition}[Bakry-\'Emery \cite{BaEm}] \label{prop:BaEm}
Consider a probability measure on $\R^n$ (or a linear sub-space) of the form 
$$ d\nu(x) \propto e^{-H(x)}dx, $$
and assume that there is $\ld >0 $ such that as quadratic forms:
\begin{equation}\label{strongconvexity}
    \H(H(x)) \ge \ld \cdot \mathrm{id}.
\end{equation}

\noi
Then, we have
$$ \mathrm{LS}(\nu) \le \frac{2}{\ld} < \infty.$$
\end{proposition}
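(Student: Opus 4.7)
The plan is to follow the classical semigroup argument of Bakry-\'Emery. Let $L = \Delta - \nabla H \cdot \nabla$ be the overdamped Langevin generator; an integration by parts (using $d\nu \propto e^{-H}\,dx$) shows that $L$ is symmetric on $L^2(\nu)$, so the Markov semigroup $P_t = e^{tL}$ is reversible with respect to $\nu$. With the carr\'e du champ $\Gamma(f,g) = \nabla f \cdot \nabla g$, Bochner's identity reads
$$\Gamma_2(f) := \tfrac12 L\,\Gamma(f) - \Gamma(f, Lf) = \|\nabla^2 f\|_{HS}^2 + \langle \nabla f,\, \H(H)\,\nabla f\rangle,$$
and the hypothesis \eqref{strongconvexity} gives the curvature-dimension bound $\Gamma_2(f) \ge \ld\,\Gamma(f)$.

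The first key step is the gradient commutation $|\nabla P_t f| \le e^{-\ld t}\, P_t(|\nabla f|)$. I would prove this by differentiating $\Lambda(s) := P_s\bigl(\Gamma(P_{t-s} f)\bigr)$ on $[0,t]$ to obtain $\Lambda'(s) = 2 P_s\bigl(\Gamma_2(P_{t-s} f)\bigr) \ge 2\ld\,\Lambda(s)$, then applying Gronwall to get $\Gamma(P_t f) \le e^{-2\ld t} P_t \Gamma(f)$; the pointwise square-root form follows from an analogous interpolation along the semigroup. The second ingredient is the de~Bruijn-type entropy dissipation identity: for smooth strictly positive $g$,
$$-\frac{d}{dt}\int (P_t g) \log(P_t g)\, d\nu = \int \frac{|\nabla P_t g|^2}{P_t g}\, d\nu,$$
which, combined with the ergodic convergence $P_t g \to \int g\, d\nu$ (standard under strict log-concavity with $\ld>0$), yields
$$\mathrm{Ent}_\nu(g) = \int_0^\infty \int \frac{|\nabla P_t g|^2}{P_t g}\, d\nu\, dt.$$

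To close the argument, take $g = f^2$ with $\int g\, d\nu = 1$. From $|\nabla g| = 2|f|\,|\nabla f|$, the strong gradient commutation above, and Cauchy-Schwarz for the Markov operator $P_t$,
$$|\nabla P_t g|^2 \le e^{-2\ld t}\bigl(P_t(2|f|\,|\nabla f|)\bigr)^2 \le 4 e^{-2\ld t}\,(P_t g)\,\bigl(P_t|\nabla f|^2\bigr).$$
Dividing by $P_t g$, integrating against $\nu$, and using the $P_t$-invariance of $\nu$, we get $\int |\nabla P_t g|^2/(P_t g)\, d\nu \le 4 e^{-2\ld t}\int |\nabla f|^2\, d\nu$. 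Integrating in $t$ produces $\mathrm{Ent}_\nu(f^2) \le (2/\ld)\int |\nabla f|^2\, d\nu$, which is precisely \eqref{LSI} with $C = 2/\ld$. The conceptual work is entirely contained in the $\Gamma_2$ bound; the main technical, rather than conceptual, obstacle is justifying the $t \to \infty$ limit and the smoothness of $P_t f$, both routine in finite dimensions (and on any affine subspace, by restricting $H$ there) via a density argument approximating $F$ by smooth bounded functionals bounded away from zero combined with the hypoelliptic smoothing of $P_t$.
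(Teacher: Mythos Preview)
The paper does not supply a proof of this proposition; it is stated with a citation to \cite{BaEm} and used as a black box. Your argument is the standard Bakry--\'Emery semigroup proof via the curvature-dimension inequality $\Gamma_2 \ge \ld\,\Gamma$, the gradient commutation estimate, and de~Bruijn's entropy dissipation identity, and it is correct.
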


\subsection{Gaussian measure and Gibbs measure}
The Gaussian measure $\mu$ we introduced formally in \eqref{baseGM} can be defined
rigorously as the law of 
\begin{align}
    u(x) = \sum_{n\in \Z} \frac{g_n}{\jb{n}}e^{inx},
    \label{baseGMr}
\end{align}
where $\{g_n\}_{n\in \Z}$ is a family of 
mutually independent complex-valued centred Gaussian random variables. 
By standard bounds on Gaussian random variables, we have the following. 
\begin{lemma}
\label{LEM:GaussInt}
   Let $u$ be a function-valued random variable with $\Law(u) = \mu$ in \eqref{baseGM}. Then for every $ q < \infty$, and every $x \in \T$, we have that 
   \begin{equation}
       \E |u(x)|^q = \E \|u \|_{L^q(\T)}^q < \infty.
   \end{equation}
\end{lemma}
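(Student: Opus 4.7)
The plan is to combine the stationarity of $u$ in the variable $x$, pointwise Gaussian integrability, and Fubini's theorem, exploiting the paper's normalization $\int_\T dx = 1$.

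First, I would verify that for each fixed $x \in \T$, the series $u(x) = \sum_{n\in\Z}\frac{g_n}{\jb{n}}e^{inx}$ is well-defined in $L^2(\Omega)$. Indeed, the partial sums $u_N(x) = \sum_{|n|\le N}\frac{g_n}{\jb{n}}e^{inx}$ are centered complex Gaussians with uniformly bounded second moment $\sum_{|n|\le N}\jb{n}^{-2}\E|g_n|^2 \le C\sum_n \jb{n}^{-2} < \infty$, so they converge in $L^2(\Omega)$ (and hence in $L^q(\Omega)$ for every $q<\infty$, by Gaussian hypercontractivity) to a centered complex Gaussian random variable $u(x)$. In particular $\E|u(x)|^q < \infty$ for every $q<\infty$, and $\E|u(x)|^q$ does not depend on $x$; this can be seen either by directly computing the variance $\sigma^2 = \sum_n \jb{n}^{-2}\E|g_n|^2$, or by noting that the joint law of $\{g_n\}_{n\in\Z}$ equals that of $\{e^{ina}g_n\}_{n\in\Z}$ for any $a\in\R$, so the law of $u(\cdot)$ is translation invariant in $x$.

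Next, I would apply Fubini's theorem to the non-negative integrand $|u(x)|^q$ to write
\begin{align*}
   \E\|u\|_{L^q(\T)}^q \;=\; \E\int_\T |u(x)|^q\,dx \;=\; \int_\T \E|u(x)|^q\,dx \;=\; \E|u(x_0)|^q,
\end{align*}
where the last step uses both the $x$-independence of $\E|u(x)|^q$ and the normalization $\int_\T dx = 1$.

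The only subtlety I foresee is making sure that the pointwise object $u(x)$ constructed above genuinely represents the $L^q(\T)$-valued random variable $u$ when it is integrated in $x$. I would handle this by first running the argument with the frequency truncations $P_N u$, for which the pointwise and function-valued descriptions manifestly coincide, and then passing $N\to\infty$ using \eqref{PNconvergence} together with Gaussian hypercontractivity in $\Omega$ to exchange limits with expectations. This is the only nontrivial step; everything else is a routine consequence of standard Gaussian estimates.
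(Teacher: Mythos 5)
The paper states this lemma without proof (it is simply declared a consequence of ``standard bounds on Gaussian random variables''), so there is no paper argument to compare against; your write-up correctly supplies the standard argument that the lemma implicitly relies on. The three ingredients you identify are exactly the right ones: (i) pointwise Gaussian integrability of $u(x)$, via $L^2(\Omega)$-convergence of the partial sums $\sum_{|n|\le N}\jb{n}^{-1}g_n e^{inx}$ together with equivalence of Gaussian moments; (ii) stationarity of the law of $u(\cdot)$ under translation of $x$, which you justify both by the variance computation and by the distributional identity $\{e^{ina}g_n\}\overset{d}{=}\{g_n\}$; and (iii) Tonelli together with the normalisation $\int_\T dx = 1$, which turns $\E\int_\T |u(x)|^q\,dx$ into $\E|u(x_0)|^q$. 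You also correctly flag the only subtle point, namely that one needs a jointly measurable realisation of $(\omega,x)\mapsto u(\omega)(x)$ to apply Tonelli, and your proposed fix (run the argument for $P_N u$, where the pointwise and $L^q$-valued descriptions agree, then pass to the limit using \eqref{PNconvergence} and equivalence of Gaussian moments) is sound. No gaps.
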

Moreover, using \eqref{baseGMr}, 
we can deduce the following fact.
\begin{lemma} \label{0sphere}
    Let $K > 0$, and consider the set 
    $$ S_K = \{ u \in L^2: \| u \|_{L^2}^2 = K\}.$$
    Then 
    $$ \mu(S_K) = 0.$$
\end{lemma}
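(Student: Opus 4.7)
The plan is to reduce the statement $\mu(S_K)=0$ to the fact that the real random variable $\|u\|_{L^2}^2$ has no atoms. Since $\mu$ is defined as the law of the Gaussian series in \eqref{baseGMr}, this is the same as showing that $\P(\|u\|_{L^2}^2 = K)=0$ for every fixed $K>0$.

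By Parseval applied to \eqref{baseGMr}, I would write
$$ \|u\|_{L^2}^2 \;=\; \sum_{n\in\Z} \frac{|g_n|^2}{\jb{n}^2} \;=\; |g_0|^2 + Y, \qquad Y := \sum_{n\neq 0} \frac{|g_n|^2}{\jb{n}^2}, $$
where the series for $Y$ converges almost surely to a finite value, as $\E Y = \sum_{n\neq 0} \jb{n}^{-2} < \infty$. The key observation is that $Y$ is measurable with respect to the $\sigma$-algebra generated by $\{g_n : n\neq 0\}$, and therefore independent of $g_0$.

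Next, I would use that $g_0$ is a nondegenerate centred complex Gaussian, so $|g_0|^2$ is (up to a positive factor) exponentially distributed, and in particular absolutely continuous with respect to the Lebesgue measure on $[0,\infty)$. Consequently, $\P(|g_0|^2 = a) = 0$ for every $a\in\R$. Conditioning on $Y$ and using the independence above,
$$ \mu(S_K) \;=\; \P\bigl(|g_0|^2 + Y = K\bigr) \;=\; \E\bigl[\,\P(|g_0|^2 = K - Y \mid Y)\,\bigr] \;=\; 0, $$
which is the claim.

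There is essentially no obstacle here; the only small point is to single out one Fourier mode whose squared modulus has a continuous distribution and is independent of the remainder of the series, which is immediate from the mutual independence of the family $\{g_n\}_{n\in\Z}$. Any fixed index $n_0$ would work equally well; I picked $n_0 = 0$ only for notational simplicity.
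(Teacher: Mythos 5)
Your proof is correct and complete. The paper itself does not spell out the argument; it simply refers to Lemma~2.7 of Robert--Seong--Tolomeo--Wang. The reasoning you give --- isolate the mode $n_0=0$, note that $|g_0|^2$ is exponentially distributed and hence atomless, observe it is independent of the remainder $Y=\sum_{n\neq 0}\jb{n}^{-2}|g_n|^2$ (which is a.s.\ finite since $\E Y<\infty$), and conclude by conditioning on $Y$ --- is precisely the standard argument for atomlessness of the $L^2$-norm under a Gaussian measure, and it is the same type of argument as in the cited lemma. No gap.
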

\begin{proof}
    The proof is completely analogous to \cite[Lemma 2.7]{RoSeToWa}.
\end{proof}

\subsection{Bou\'e-Dupuis variational formula} In order to state the Bou\'e-Dupuis variational formula,
as in \cite{BG,TolWeb, OOT1,OOT2, OST20}, we first need to introduce some notations.
Let $W_t$ be  a cylindrical Brownian motion in $L^2(\T)$
given by
\begin{align*}
W_t = \sum_{n \in \Z} B_n(t) e^{inx},
\end{align*}

\noi
where  
$\{B_n\}_{n \in \Z}$ is a family of mutually independent complex-valued\footnote{By convention, we normalize $B_n$ so that $\text{Var}(B_n(t)) = t$.} Brownian motions on a probability space $(\O, \mathcal{F}, \mathbb{P})$.
We then define a centered Gaussian process $Y_t$
by 
\begin{align}
Y_t
=  \jb{\nabla}^{-1}W_t.
\label{P2}
\end{align}

\noi
Note that 
we have ${\P}\circ (Y(1))^{-1} = \mu$. By setting  $Y^N = P_N Y $, 
we have   $\P\circ (Y^N(1))^{-1} = (P_N)_*\mu$, 
i.e.~the push-forward of $\mu$ under $P_N$.

Next, let $\HH$ denote the space of drifts, 
which are progressively measurable processes 
belonging to 
$\dot H^1([0,1]; H^1(\T))$, and which are $0$ in $0$.
We now state the variational formula~\cite{BD, Ust};
in particular, see Theorem 7 in \cite{Ust}.

\begin{lemma}[Bou\'e-Dupuis variational formula]\label{LEM:var3}
	Let $Y$ be as in \eqref{P2} and fix $N \in \mathbb{N}$.
	Suppose that  $V:C^\infty(\T^d) \to \R$
	is measurable, and satisfies $\E\big[|V(P_NY_1)|^p\big] < \infty$
	and $\E\big[e^{p'V(P_NY_1)} \big] < \infty$, for some $1 < p < \infty$, with 
    $$ \frac1p + \frac 1 {p'} = 1.$$
	Then, we have
\begin{align} \label{BDN}
	\log \E\Big[e^{V(P_N Y_1)}\Big]
	= \sup_{\Theta \in \HH}
	\E\bigg[ V(P_N Y_1 + P_N \Theta(1)) - \frac{1}{2} \int_0^1 \| \dot\Theta(t) \|_{H^1_x}^2 dt \bigg],
\end{align}
where the expectation $\E = \E_\P$
is with respect to the underlying probability measure~$\P$. 
\end{lemma}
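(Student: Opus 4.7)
The plan has two independent parts: (i) identifying the Hessian of $V$ via a Cameron-Martin shift combined with Gaussian integration by parts, and (ii) exhibiting an explicit sequence $\varphi_n\in H^1(\T)$ along which the displayed quantity diverges.

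For part (i), I would begin by exploiting that $\varphi,w\in H^1$ lies in the Cameron-Martin space of $\mu$. Substituting $u\mapsto u+\varepsilon w$ in the Gaussian integral defining $Z(\varphi+\varepsilon w)$ and using the Cameron-Martin formula gives the clean identity
\begin{equation*}
V(\varphi+\varepsilon w) \;=\; V(\varphi)\;-\;\tfrac{\varepsilon^2}{2}\|w\|_{H^1}^2 \;+\;\log \E_{\rho_\varphi}\!\bigl[e^{\varepsilon\langle u,w\rangle_{H^1}}\bigr],
\end{equation*}
whose right-hand side avoids any derivative of the indicator $\mathbbm 1_{\|u+\varphi\|^2\le K}$. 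Expanding the cumulant generating function in $\varepsilon$ yields
\begin{equation*}
\mathcal HV(\varphi)[w,w] \;=\; -\|w\|_{H^1}^2 \;+\; \mathrm{Var}_{\rho_\varphi}\!\bigl(\langle u,w\rangle_{H^1}\bigr).
\end{equation*}

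I would then estimate this variance from below via Stein's lemma for the underlying Gaussian $\mu$. Applying integration by parts twice against the density $e^{\frac{1}{p}\int|u+\varphi|^p}\mathbbm 1_{\|u+\varphi\|^2\le K}$ produces the identity
\begin{equation*}
\mathrm{Var}_{\rho_\varphi}\!\bigl(\langle u,w\rangle_{H^1}\bigr) \;=\; \|w\|_{H^1}^2 \;+\; \E_{\rho_\varphi}\!\bigl[Q(u,\varphi,w)\bigr] \;+\;\mathrm{Var}_{\rho_\varphi}(R)\;+\;B(\varphi,w),
\end{equation*}
where $R=\int|u+\varphi|^{p-2}\Re((u+\varphi)\bar w)\,dx$, the pointwise Hessian $Q = \int|u+\varphi|^{p-2}|w|^2 + (p-2)|u+\varphi|^{p-4}\Re((u+\varphi)\bar w)^2\,dx$, and $B$ is a boundary contribution on $\{\|u+\varphi\|^2=K\}$ whose sign is favourable because the constraint set is a sublevel set of a convex functional (the outward normal being parallel to $u+\varphi$). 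Using the eigenvalue bound $|z|^{p-2}|w|^2+(p-2)|z|^{p-4}\Re(z\bar w)^2 \ge (p-1)|z|^{p-2}|w|^2$ in the directions where $w$ is pointwise aligned with $u+\varphi$ and the non-negativity of $\mathrm{Var}(R)$ elsewhere then yields the stated lower bound \eqref{HessianLowerBound}.

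For part (ii), fix $w\in H^1\setminus\{0\}$ and pick $x_0\in\T$, $\delta>0$ and a neighbourhood $A$ of $x_0$ with $|w|^2\ge\delta$ on $A$. For a fixed bump $\chi\in C^\infty_c(\R)$ with $\|\chi\|_{L^2(\R)}=1$, set
\begin{equation*}
\varphi_n(x) := M\sqrt n\,\chi\bigl(n(x-x_0)\bigr),
\end{equation*}
with amplitude $M$ chosen so that $M^2>K$. Then $\varphi_n\in H^1(\T)$ and $\int_A|\varphi_n|^{p-2}\,dx\sim M^{p-2} n^{(p-4)/2}\to\infty$ as $n\to\infty$, since $p>4$. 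Since $\|\varphi_n\|_{L^2}^2>K$, the constraint $\|u+\varphi_n\|_{L^2}^2\le K$ forces $u\approx -\varphi_n$ on the support of $\rho_{\varphi_n}$, so that $|u|$ inherits the peaked profile of $\varphi_n$. I would then use the Bou\'e--Dupuis variational formula (Lemma~\ref{LEM:var3}) applied at level $N$ large to convert this heuristic into a quantitative bound: the drift $\Theta_*(t)=-t\,\varphi_n$ gives a matching lower bound on both $\log Z(\varphi_n)$ and $\log\int(\int_A|u|^{p-2}dx)\,e^{\frac1p\int|u+\varphi_n|^p}\mathbbm 1_{\|u+\varphi_n\|^2\le K}\,d\mu$, yielding $\E_{\rho_{\varphi_n}}[\int_A|u|^{p-2}dx]\gtrsim\int_A|\varphi_n|^{p-2}dx\to\infty$. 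Combined with part (i),
\begin{equation*}
\mathcal HV(\varphi_n)[w,w]\;\ge\;(p-1)\delta\int_A \E_{\rho_{\varphi_n}}[|u|^{p-2}]\,dx\;\to\;\infty,
\end{equation*}
which is precisely \eqref{NoHope}.

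The main obstacle is the quantitative concentration argument in part (ii): because $\|\varphi_n\|_{L^2}^2>K$, the set $\{\|u+\varphi_n\|^2\le K\}$ is a Cameron-Martin rare event under $\mu$, and matched Bou\'e--Dupuis upper and lower bounds are required to control the ratio defining $\E_{\rho_{\varphi_n}}[\int_A|u|^{p-2}dx]$ at leading order. In part (i), the most delicate point is to justify the Gaussian integration by parts across the sphere $\{\|u+\varphi\|^2=K\}$ (using Lemma~\ref{0sphere} and a smooth approximation of the indicator) and to verify the sign of the resulting boundary term $B(\varphi,w)$.
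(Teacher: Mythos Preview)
Your proposal addresses Theorem~\ref{thm:noLS}, not the stated Lemma~\ref{LEM:var3} (the Bou\'e--Dupuis formula, which the paper cites from \cite{Ust} without proof). Comparing instead to the paper's proof of Theorem~\ref{thm:noLS}:

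For part~(i), the paper's route is shorter and avoids your boundary term entirely. After the Cameron--Martin shift (which makes the cutoff $\1_{\|u\|_{L^2}^2\le K}$ independent of~$\varphi$), the paper replaces the indicator by a smooth penalty $-R(\|u+\varphi\|_{L^2}^2-K)_+^\sigma$, applies the identity $\mathcal H\log\E_\mu[e^{F}][w,w]=\E_\nu[\mathcal HF[w,w]]+\mathrm{Var}_\nu(\nabla F\cdot w)\ge\E_\nu[\mathcal HF[w,w]]$ with $d\nu\propto e^F d\mu$, and sends $R\to\infty$; the penalty-Hessian contribution vanishes in the limit because it is supported where the weight $e^{-R(\cdots)_+^\sigma}$ dominates. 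Your second Cameron--Martin shift plus Stein's lemma produces a surface term $B(\varphi,w)$ on the sphere whose sign you assert but do not verify, and your eigenvalue claim is stated the wrong way: the \emph{smallest} eigenvalue of the real Hessian of $\tfrac1p|z|^p$ is $|z|^{p-2}$ (attained for $w\perp z$), so $(p-1)|z|^{p-2}|w|^2$ is an upper, not lower, bound. The constant is immaterial for the conclusion, but the argument as written does not stand.

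For part~(ii) there is a genuine gap. Your choice $\|\varphi_n\|_{L^2}^2=M^2>K$ makes $\{\|u+\varphi_n\|_{L^2}^2\le K\}$ a Cameron--Martin rare event, and the test drift $\Theta_*(t)=-t\varphi_n$ gives only $\log Z(\varphi_n)\ge O(1)-\tfrac12\|\varphi_n\|_{H^1}^2\sim -n^2$, with no matching upper bound; the ratio defining $\E_{\rho_{\varphi_n}}[\int_A|u|^{p-2}]$ is therefore uncontrolled at the precision required. The paper sidesteps this by choosing $\varphi_M(x)=M^{1/2}\varphi_1(Mx)$ with $\|\varphi_M\|_{L^2}^2\le K/2$, so the zero drift already lands inside the constraint and $\log\wt Z\gtrsim\|\varphi_M\|_{L^p}^p\sim M^{p/2-1}$ immediately. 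The core step (Lemma~\ref{lem:bigLp-2}) is then a contradiction argument on near-optimisers: if a $1$-optimiser $\Theta_1$ satisfied $\E[\int|P_N(\varphi+\Theta_1(1))|^{p-2}\1_{E_<}]\ll M^{p/2-2}$, Gagliardo--Nirenberg forces $\|\Theta_1(1)\|_{H^1}^2\gtrsim M^2$ on a set of substantial probability, so the drift cost overwhelms the potential gain and contradicts near-optimality. Lemma~\ref{optTVconvergence} then transports this lower bound to $\E_{\rho_\varphi}$. This mechanism --- $\varphi$ \emph{inside} the constraint plus a contradiction on the drift --- is what is missing from your sketch.
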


The following consequence of this formula will play a fundamental role. 

\begin{lemma} \label{optTVconvergence}
    Let $Y, N, V$ be as in Lemma \ref{LEM:var3}. Let $0 < \eps < 1$, and suppose that $\Theta_\eps \in \HH$ is an $\eps^2$-optimiser for the expression in \eqref{BDN}, in the sense that 
\begin{align*}
\E\bigg[ &V(P_N Y_1 + P_N \Theta_\eps(1)) - \frac{1}{2} \int_0^1 \| \dot\Theta_\eps(t) \|_{H^1_x}^2 dt \bigg] \\
        & \ge -\eps^2 + \sup_{\Theta \in \HH}
	\E\bigg[ V(P_N Y_1 + P_N \Theta(1)) - \frac{1}{2} \int_0^1 \| \dot\Theta(t) \|_{H^1_x}^2 dt \bigg].
    \end{align*}

\noi
Let $F: C^\infty(\T) \to \R$ be measurable and bounded. 
Then, we have that 
\begin{equation}\label{TVbound}
\Big|\frac{\E[F(P_N Y_1)\exp(V(P_N Y_1))]}{\E[\exp(V(P_NY_1))]} 
- \E[F(P_N (Y_1 + \Theta_\eps(1))]\Big| 
\le (1 + \frac e 2)\eps \| F \|_{L^\infty}
    \end{equation}
\end{lemma}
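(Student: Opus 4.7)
The plan is to rewrite the first term on the left-hand side of~\eqref{TVbound} as $\E_{\tilde\P}[F(P_N Y_1)]$, where $\tilde\P$ is the probability measure with Radon--Nikodym density $\frac{d\tilde\P}{d\P}=\frac{e^{V(P_N Y_1)}}{\E[e^{V(P_N Y_1)}]}$, and then to compare this with $\E[F(P_N(Y_1+\Theta_\eps(1)))]$ by applying the Bou\'e--Dupuis variational formula to the perturbed functional $V+\lambda F$ for a well-chosen scalar $\lambda>0$.

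First I would note that, since $F$ is bounded, the integrability hypotheses of Lemma~\ref{LEM:var3} persist for $V+\lambda F$, so the variational identity remains available. Using $\Theta_\eps$ as a (generally sub-optimal) test drift for $V+\lambda F$ and subtracting the $\eps^2$-suboptimality of $\Theta_\eps$ for $V$ itself, the common $H^1$-cost $\tfrac12\E\!\int_0^1\|\dot\Theta_\eps\|_{H^1}^{2}\,dt$ cancels, and I obtain the one-sided inequality
\[
\log\E_{\tilde\P}\!\big[e^{\lambda F(P_N Y_1)}\big]
\;\ge\;\lambda\,\E\!\big[F(P_N(Y_1+\Theta_\eps(1)))\big]-\eps^{2}.
\]

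For the matching upper bound on $\log\E_{\tilde\P}[e^{\lambda F}]$, I would use the elementary pointwise inequality $e^{x}\le 1+x+\tfrac12 x^{2}e^{|x|}$ (which follows, for both signs of $x$, from a direct Taylor comparison) applied to $x=\lambda F(P_N Y_1)$. Combined with $|F|\le\|F\|_{\infty}$ and the estimate $\log(1+y)\le y$ (valid at the scale chosen below), this yields
\[
\log\E_{\tilde\P}\!\big[e^{\lambda F}\big]
\;\le\;\lambda\,\E_{\tilde\P}[F(P_N Y_1)]+\tfrac12\lambda^{2}\|F\|_{\infty}^{2}\,e^{|\lambda|\|F\|_{\infty}}.
\]
Subtracting the two estimates, dividing by $\lambda$, and choosing $\lambda=\eps/\|F\|_{\infty}$ balances the two errors and gives
\[
\E\!\big[F(P_N(Y_1+\Theta_\eps(1)))\big]-\E_{\tilde\P}[F(P_N Y_1)]
\;\le\;\eps\|F\|_{\infty}\Big(1+\tfrac{e^{\eps}}{2}\Big)
\;\le\;\Big(1+\tfrac{e}{2}\Big)\eps\|F\|_{\infty},
\]
where the last step uses $\eps<1$. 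Repeating the argument with $F$ replaced by $-F$ yields the reverse inequality, and the two together give \eqref{TVbound}.

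The main technical point is the book-keeping of constants: the choice $\lambda=\eps/\|F\|_{\infty}$ is precisely what equates the suboptimality error $\eps^{2}/\lambda$ with the quadratic remainder $\tfrac12\lambda\|F\|_{\infty}^{2}e^{\lambda\|F\|_{\infty}}$, and the assumption $\eps<1$ is used only to replace $e^{\eps}$ by $e$ and recover the stated constant $(1+e/2)$. I do not expect any deeper analytic obstacle, since the bounded perturbation $\lambda F$ trivially preserves the moment and exponential integrability required to apply Lemma~\ref{LEM:var3} to $V+\lambda F$.
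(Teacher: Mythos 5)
Your argument is correct and is essentially the proof the paper gives: both tilt the potential $V$ by a small bounded perturbation, insert $\Theta_\eps$ as a test drift in the Bou\'e--Dupuis formula so the quadratic cost cancels against the $\eps^2$-suboptimality, and close with a second-order bound on the exponential combined with $\log(1+y)\le y$. The only variation is cosmetic: the paper first reduces to indicator functions $F=\1_E$ (using that the tilted expectation is a probability measure and replacing $E$ by $E^c$ to make the estimate one-sided), so the perturbation is $\eps\1_E$ and one can use the cleaner bound $e^\eps\le 1+\eps+\tfrac e2\eps^2$, whereas you perturb by $\lambda F$ for general bounded $F$ and then optimize $\lambda=\eps/\|F\|_\infty$; both choices yield the same constant $(1+\tfrac e2)$.
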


\begin{proof}
Since the map 
\[
F \mapsto \frac{\E[F(P_N Y_1)\exp(V(P_N Y_1))]}{\E[\exp(V(P_NY_1))]}
\]

\noi
defines a probability measure, 
it is enough to show \eqref{TVbound} when $F = \1_E$ is an indicator function. Moreover, by swapping $E$ with $E^c$, it is enough to show that 
\begin{equation}\label{TVboundtoprove}
\frac{\E[\1_E(P_N Y_1)\exp(V(P_N Y_1))]}{\E[\exp(V(P_NY_1))]} - \E[\1_E(P_N (Y_1 + \Theta_\eps(1))] \ge -(1+\frac e2)\eps.
\end{equation}

\noi
For this, 
we apply \eqref{BDN} with $V$ replaced by $ \eps \1_E + V$, for $0 \le \eps$. By the elementary inequalities 
\[
e^{\eps} \le 1 + \eps + \frac{e}{2} \eps^2
\quad \quad
\text{and}
\quad \quad
\log(1 + x) \le x 
\]

\noi
for $0\le \eps \le 1$, $x\ge 0$, we have that 
\begin{align*}
\big(\eps + \tfrac{e}{2} \eps^2\big)&
   \frac{\E\big[\mathbf{1}_E(P_N Y_1)\,e^{V(P_N Y_1)}\big]}
        {\E\big[e^{V(P_NY_1)}\big]}  \\
&\ge \log \frac{\E\big[e^{(\eps \mathbf{1}_E + V)(P_N Y_1)}\big]}
              {\E\big[e^{V(P_NY_1)}\big]} \\
&= \sup_{\Theta \in \HH}
      \E\bigg[ (\eps \mathbf{1}_E + V)\big(P_N Y_1 + P_N \Theta(1)\big)
               - \frac{1}{2} \int_0^1 \|\dot\Theta(t)\|_{H^1_x}^2 \, dt
        \bigg] \\
&\phantom{=} \;-\;
   \sup_{\Theta \in \HH}
      \E\bigg[ V\big(P_N Y_1 + P_N \Theta(1)\big)
               - \frac{1}{2} \int_0^1 \|\dot\Theta(t)\|_{H^1_x}^2 \, dt
        \bigg] \\
&\ge \E\bigg[ (\eps \mathbf{1}_E + V)\big(P_N Y_1 + P_N \Theta_\eps(1)\big)
             - \frac{1}{2} \int_0^1 \|\dot\Theta_\eps(t)\|_{H^1_x}^2 \, dt
       \bigg] \\
&\phantom{\ge}\; -\,\eps^2
   \;-\; \E\bigg[ V\big(P_N Y_1 + P_N \Theta_\eps(1)\big)
                 - \frac{1}{2} \int_0^1 \|\dot\Theta_\eps(t)\|_{H^1_x}^2 \, dt
          \bigg] \\
&= -\,\eps^2
   + \eps\,\E\Big[\mathbf{1}_E\big(P_N (Y_1 + \Theta_\eps(1))\big)\Big].
\end{align*}

\noi
from which we deduce 
\[
\frac{\E[F(P_N Y_1)\exp(V(P_N Y_1))]}{\E[\exp(V(P_NY_1))]}
- \E[F(P_N (Y_1 + \Theta_\eps(1)))]
\;\ge\; -\Big(1 + \frac e2\Big) \eps,
\]

\noi
and hence 
    \eqref{TVboundtoprove}.

\end{proof}

\section{Proof of Theorem \ref{thm:LS}}
Throughout this section, we consider the measure 
\begin{equation}
    d\rho_{\Ld}(u) = \frac{1}{Z_\Ld} \exp(- \Ld \int_\T |u|^2 dx) d \rho(u), 
    \label{rhoLd}
\end{equation}
where $\Ld>0$ is an appropriate normalisation constant. Since $\rho$ is supported on the set $\{ \| u \|_{L^2}^2 \le K \}$, we have that 
\begin{equation} \label{LSequivalence}
   \exp(-\Ld K) \mathrm{LS}(\rho) \le  \mathrm{LS}(\rho_\Ld) \le \exp( \Ld K) \mathrm{LS}(\rho).
\end{equation}

\noi
We have the following lemma.

\begin{lemma} \label{NRconvergenceweak}
    Let $p < 6$, and let $N \in \N$, $R > 0$
    and $\Lambda>0$. Consider the measures 
    \begin{align*}
d  &\rho_{\Ld,N,R} \\
 &=
 \frac{1}{Z_{\Ld,N,R}} \exp\Big( - \Ld \int_\T |P_Nu|^2 dx + \frac 1p \int_\T |P_Nu|^p dx- R(\|P_Nu\|_{L^2}^2 - K)_+^8 \Big) d \mu. 
    \end{align*}
    Then we have that
    \begin{equation*}
        \lim_{N \to \infty} \lim_{R \to \infty} \rho_{\Ld, N, R} = \rho_\Ld,
    \end{equation*}
    where $\rho_\Ld$ is given as
    in \eqref{rhoLd} and the limits are taken in total variation.
\end{lemma}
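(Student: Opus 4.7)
The plan is to establish convergence in two separate steps in total variation. For fixed $N$, I first show that $\rho_{\Lambda, N, R}$ converges in TV as $R \to \infty$ to the intermediate measure
$$d\rho_{\Lambda,N}(u) \;\propto\; \exp\Big(-\Lambda \|P_N u\|_{L^2}^2 + \tfrac1p \int_\T |P_N u|^p\,dx\Big) \1_{\{\|P_N u\|_{L^2}^2 \le K\}}\,d\mu(u),$$
then I show $\rho_{\Lambda,N} \to \rho_\Lambda$ in TV as $N\to\infty$. Since all measures involved are probabilities, each step reduces to proving $L^1(\mu)$-convergence of the unnormalised densities together with convergence of the corresponding partition functions to a strictly positive limit; TV convergence of the normalised measures then follows by elementary manipulation.

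For the inner limit ($R\to\infty$), let $f_{\Lambda,N,R}$ denote the unnormalised density of $\rho_{\Lambda,N,R}$. Pointwise in $u$, the function $R\mapsto f_{\Lambda,N,R}(u)$ is monotonically non-increasing: it is independent of $R$ on $\{\|P_N u\|_{L^2}^2 \le K\}$ and decreases to zero on $\{\|P_N u\|_{L^2}^2 > K\}$, hence $f_{\Lambda,N,R} \to f_{\Lambda,N}$ pointwise everywhere. For each $R>0$ we have $Z_{\Lambda,N,R}<\infty$: indeed, by Bernstein's inequality $\int_\T |P_N u|^p\,dx \les N^{p/2-1} \|P_N u\|_{L^2}^p$, and since $8>p/2$ for $p<6$, the octic penalty dominates the polynomial $L^p$-growth, making the density $\mu$-integrable. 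Dominated convergence applied downward (with $f_{\Lambda,N,R_0}$ as dominating function for any fixed $R_0>0$) yields $f_{\Lambda,N,R}\to f_{\Lambda,N}$ in $L^1(\mu)$ and $Z_{\Lambda,N,R}\to Z_{\Lambda,N}>0$, from which TV convergence follows.

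For the outer limit ($N\to\infty$), denote by $g_N, g_\infty$ the unnormalised densities of $\rho_{\Lambda,N}$ and $\rho_\Lambda$. Pointwise $\mu$-a.e.~convergence $g_N\to g_\infty$ follows from three facts: $\|P_N u\|_{L^2}^2 \to \|u\|_{L^2}^2$ $\mu$-a.s.~by monotone convergence (using $\E_\mu \|u\|_{L^2}^2 = \sum_n \jb{n}^{-2} <\infty$); $\int_\T |P_N u|^p dx \to \int_\T |u|^p dx$ pathwise via \eqref{PNconvergence} (since $u\in L^p$ $\mu$-a.s.~for every $p<\infty$); and $\1_{\{\|P_N u\|_{L^2}^2\le K\}} \to \1_{\{\|u\|_{L^2}^2\le K\}}$ off the $\mu$-null set $\{\|u\|_{L^2}^2 = K\}$, using Lemma \ref{0sphere}. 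The $L^1(\mu)$-convergence then reduces to uniform integrability of $\{g_N\}_N$, which is the main obstacle. This follows from the standard Lebowitz-Rose-Speer-Bourgain bound
$$\sup_N \E_\mu\bigg[\exp\Big(\tfrac{q}{p} \int_\T |P_N u|^p\,dx\Big) \1_{\{\|P_N u\|_{L^2}^2 \le K\}}\bigg] < \infty \qquad\text{for every } q<\infty,$$
which underlies the construction of the focusing Gibbs measure in \cite{Lebowitz1988, Bo94, OST, TolWeb} and can be established via the Bou\'e-Dupuis formula (Lemma \ref{LEM:var3}) applied with a drift adapted to the $L^2$-cutoff. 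Granting this bound, $\{g_N\}$ is bounded in every $L^q(\mu)$, hence uniformly integrable; Vitali's theorem then delivers $g_N\to g_\infty$ in $L^1(\mu)$, the partition functions converge to $Z_\Lambda>0$, and the TV convergence of the normalised measures follows.
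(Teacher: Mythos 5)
Your proof is correct and follows essentially the same route as the paper: reduce total-variation convergence to $L^1(\mu)$ convergence of the unnormalised densities, establish $\mu$-a.e.\ pointwise convergence (using Lemma~\ref{0sphere} and \eqref{PNconvergence}), and then obtain uniform integrability from the exponential moment bound $\sup_N \E_\mu[\exp(\tfrac qp\int_\T |P_N u|^p\,dx)\1_{\{\|P_N u\|_{L^2}^2\le K\}}]<\infty$, which the paper cites from \cite[Proposition~3.1]{TolWeb}. The only cosmetic differences are that you split the iterated limit into two explicit TV steps (with a Bernstein estimate justifying finiteness of $Z_{\Lambda,N,R}$ for the inner $R\to\infty$ limit, which the paper handles via dominated convergence directly inside the $\limsup_R$), and you invoke Vitali by name; neither changes the substance.
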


\begin{proof}


    It is enough to show that 
    \begin{align*}
    \lim_{N \to \infty}& \lim_{R \to \infty}   \exp\Big( - \Ld \int_\T |P_Nu|^2 dx + \frac 1p \int_\T |P_Nu|^p dx - R(\|P_Nu\|_{L^2}^2 - K)_+^8 \Big)    \\
        &\quad \,= \exp\Big( -\Ld \int_\T |u|^2 dx + \frac 1p \int_\T |u|^p dx\Big) \1_{\| u\|_{L^2}^2
    \le K}
    \end{align*} 
    in $L^1(\mu)$.
    We first check pointwise convergence and then verify a uniform
    integrability bound.

    We have that 
    \begin{align*}
 \lim_{N \to \infty} &  \lim_{R \to \infty} \exp\Big( - \Ld \int_\T |P_Nu|^2 dx + \frac 1p \int_\T |P_Nu|^p dx - R(\|P_Nu\|_{L^2}^2 - K)_+^8 \Big)         \\
        &\quad \,= \exp\Big( -\Ld \int_\T |u|^2 dx + \frac 1p \int_\T |u|^p dx\Big) \1_{\| u\|_{L^2}^2
    \le K}
    \end{align*} 
    pointwise, so it is enough to show that for any $q>1$, 
    \begin{equation}
        \sup_{N \in \N} \limsup_{R \to \infty}  \int \exp\Big(  \frac qp \int_\T |P_Nu|^p - qR(\|P_Nu\|_{L^2}^2 - K)_+^8 \Big) d\mu(u) < \infty. 
    \end{equation}
    By dominated convergence, we have that 
    \begin{align*}
     \sup_{N \in \N}       &   \limsup_{R \to \infty}  \E_\mu \bigg[  \exp\Big(  \frac qp \int_\T |P_Nu|^p dx - qR(\|P_Nu\|_{L^2}^2 - K)_+^8 \Big) \bigg] \\
        &\,\,=  \sup_{N \in \N} \E_\mu \bigg[ \exp\Big(  \frac qp \int_\T |P_Nu|^p dx \Big)\1_{\| P_Nu \|_{L^2}^2 \le K} \bigg],
    \end{align*}

\noi
    which is finite by \cite[Proposition 3.1]{TolWeb}.
\end{proof}

\begin{proof}[Proof of Theorem \ref{thm:LS}]
    By \eqref{LSequivalence}, 
    it suffices to show that for $\Ld$ big enough, $\mathrm{LS}(\rho_\Ld) <\infty$. Moreover, it is enough to show the inequality \eqref{LSI} for bounded cylindrical functions $F$, in the sense that there exists $N_0 \in \N$ such that $F(u) = F(P_{N_0} (u))$. Finally, in view of Lemma \ref{NRconvergenceweak}, the result follows once we show that 
    $$ \mathrm{LS}((P_N)_* \rho_{\Ld,N,R}) \les 1 $$
    uniformly in $N$ and $R$. We have that, on the image of $P_N$ (which is finite dimensional, and can be identified with $\C^{2N+1}$), the measure $(P_N)_* \rho_{\Ld,N,R}$ is of the form 
    \begin{align*}
    & d(P_N)_* \rho_{\Ld,N,R}(u)
    \\
    & \qquad \propto
    \exp\Big(- \Ld \int_\T |u|^2 dx + \frac 1p \int_\T |u|^p dx - R(\|u\|_{L^2}^2 - K)_+^8 - \frac12 \| u \|_{H^1}^2 \Big) du d\cj u.
    \end{align*}
    Therefore, we can apply Proposition \ref{prop:BaEm} to 
    $$ H(u) = \Ld \int_\T |u|^2 dx - \frac 1p \int_\T |u|^p dx +  R(\|u\|_{L^2}^2 - K)_+^8 + \frac12 \| u \|_{H^1}^2. $$
    We have that 
    \begin{equation}
    \begin{aligned}
        \H H(u) &= 2 \Ld \cdot \id - (p-1) |u|^{p-2} + 
        16R (\|u\|_{L^2}^2 - K)_+^7 \cdot \id \\
        &\phantom{=\ }+ 224R   (\|u\|_{L^2}^2 - K)_+^6 u \otimes u 
        + (1-\Delta),
    \end{aligned}
    \label{HessH}
    \end{equation}
which implies that 
\begin{equation*}
\begin{aligned}
  \mathcal{H} H(u) 
  &\ge 2\Lambda \cdot \mathrm{id}
   - (p-1)|u|^{p-2}
   + 16R(\|u\|_{L^2}^2-K)_+^7 \cdot \mathrm{id}
   + (1-\Delta)
\end{aligned}
\end{equation*}
as operators.

    For $w \in H^1$, since $p -2 \le 2$, by Sobolev embeddings and Young's inequality, we have that 
    (for some constant $C > 0$ that can change from line to line)
\begin{align*}
(p-1) \int_\T |u|^{p-2}|w|^2 dx &\le \int_\T C(1 + |u|^2)|w|^2 dx\\
       &\le C(1 + \|u\|_{L^2}^2) \| w \|_{L^\infty}^2 \\
       &\le C(1 + \|u\|_{L^2}^2) \| w \|_{H^\frac 34}^ 2 \\
       &\le C(1 + \|u\|_{L^2}^2) \| w \|_{L^2}^{\frac12} \|w \|_{H^1}^{\frac 32} \\
       &\le C(1 + \|u\|_{L^2}^2)^4 \|w \|_{L^2}^2 + \frac 12 \|w \|_{H^1}^2.
    \end{align*}
    This shows that 
    $$ (p-1) |u|^{p-2}\le C(1 + \|u\|_{L^2}^2)^4 \cdot \id + (1-\Delta)$$
    as operators. Moreover, there exists a constant $\Ld_* = \Ld_*(K)$ such that 
    $$ (\|u\|_{L^2}^2 - K)_+^7 - C(1 + \|u\|_{L^2}^2)^4 \ge - \Ld_*.  $$
    Therefore, for $R > \frac 1 {16}$, we get that 
    as operators
    \begin{align*}
        \H H(u) &\ge (2 \Lambda - \Ld_*)\cdot \id \ge \id
    \end{align*}
    as long as $\Lambda \ge \frac{\Ld_* + 1}{2}$, which shows that 
    $$ \mathrm{LS}((P_N)_* \rho_{\Ld,N,R}) \le 2.$$
\end{proof}
\section{Proof of Theorem \ref{thm:noLS}}
The proof of Theorem \ref{thm:noLS} is heavily reliant on the description of the measure $\rho_\ph$ provided by Lemma \ref{optTVconvergence}. To see this, we first show the following lower bound on the quantity $\H V(\ph)$, which is \eqref{HessianLowerBound} in Theorem \ref{thm:noLS}.
\begin{proposition}
Let $V(\ph)$ be given as in \eqref{Vdef}. Then for all 
$\ph\in H^1(\T)$, we have that 
    \begin{equation} \label{HessianLB}
        \mathcal{H}(V)(\ph)[w,w] \ge (p-1)\int_\T \E_{\rho_\ph}[|u|^{p-2}] |w|^2 dx.
    \end{equation}
\end{proposition}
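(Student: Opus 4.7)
My plan is to compute $\mathcal{H}V(\ph)$ by direct double differentiation of $V(\ph) = \log Z(\ph)$, where $Z(\ph) = \E_\mu[\exp(\Phi(u+\ph))\1_{\|u+\ph\|_{L^2}^2\le K}]$ and $\Phi(v) = \frac{1}{p}\int_\T|v|^p\,dx$. Since the indicator prevents direct differentiation, I~would first replace it by the smooth cutoff $\exp(-R(\|\cdot\|_{L^2}^2 - K)_+^8)$ used in Lemma~\ref{NRconvergenceweak}, compute the Hessian of the resulting smooth (and Fourier-truncated) functional, and then pass $R\to\infty$ and $N\to\infty$; by Lemma~\ref{0sphere} the sphere $\{\|u\|_{L^2}^2 = K\}$ has $\mu$-measure zero, so no surface term appears in the limit.

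For the smoothed functional a standard log-MGF computation gives the identity
\begin{equation*}
  \mathcal{H}V(\ph)[w,w] = \E_{\rho_\ph}\bigl[D^2\Phi(u+\ph)[w,w]\bigr] + \mathrm{Var}_{\rho_\ph}\bigl(D\Phi(u+\ph)[w]\bigr),
\end{equation*}
where $D\Phi(v)[w] = \Re\int_\T|v|^{p-2}v\bar w\,dx$ and, using the real structure $\C\cong\R^2$ from the preliminaries,
\begin{equation*}
  D^2\Phi(v)[w,w] = (p-2)\int_\T|v|^{p-4}(\Re(v\bar w))^2\,dx + \int_\T|v|^{p-2}|w|^2\,dx.
\end{equation*}
Since the variance is non-negative, it suffices to bound the Hessian term from below. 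Pointwise, $D^2\Phi(v)$ is a positive semi-definite quadratic form on $\C$ with eigenvalues $(p-1)|v|^{p-2}$ (aligned with $v$) and $|v|^{p-2}$ (perpendicular), which is the algebraic origin of the factor $(p-1)$ in the target inequality.

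The main obstacle is to extract simultaneously the sharp coefficient $(p-1)$ and the substitution $|u+\ph|^{p-2}\rightsquigarrow|u|^{p-2}$. Dropping the cross term in $D^2\Phi$ and keeping only the non-negative variance produces only $\int_\T\E_{\rho_\ph}[|u+\ph|^{p-2}]|w|^2\,dx$ (coefficient~$1$ in the wrong field). I~expect to recover the claimed bound through the alternative Cameron--Martin identity
\begin{equation*}
  \mathcal{H}V(\ph)[w,w] = \mathrm{Var}_{\rho_\ph}\bigl(\langle u, w\rangle_{H^1}\bigr) - \|w\|_{H^1}^2,
\end{equation*}
obtained from $V(\ph) = -\tfrac{1}{2}\|\ph\|_{H^1}^2 + \log\E_\mu[\exp(\Phi(u) + \langle u,\ph\rangle_{H^1})\1_{\{\|u\|_{L^2}^2\le K\}}]$ by the translation $u\mapsto u-\ph$ in the integral. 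Combining this identity with Gaussian integration by parts under $\mu$ (which exchanges $\langle u, w\rangle_{H^1}$ for a directional derivative in the $w$-direction) should convert the naive Hessian bound into one involving $|u|^{p-2}$ with the correct coefficient $(p-1)$, matching the real-eigenvalue $(p-1)|v|^{p-2}$ of $D^2\Phi$. This algebraic manipulation, balancing the variance contribution against the $-\|w\|_{H^1}^2$ drift, is the crucial step to pin down.
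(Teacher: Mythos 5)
Your approach matches the paper's almost exactly: replace the indicator by the smooth penalty, compute the Hessian of $\log Z$ via the variance decomposition, discard the non-negative variance term by Jensen, and take $R\to\infty$. The two difficulties you flag at the end, however, are not actual gaps in this strategy, and you should not have left the argument open.

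First, the ``wrong field'' concern is a misreading of the paper's notation rather than a mathematical obstacle. In the paper's own proof, after taking $R\to\infty$ the lower bound obtained is exactly
\[
  (p-1)\,\frac{\E_\mu\bigl[\bigl(\int_\T |u+\ph|^{p-2}|w|^2\,dx\bigr)\exp\bigl(\tfrac1p\int|u+\ph|^p\bigr)\1_{\|u+\ph\|_{L^2}^2\le K}\bigr]}{\E_\mu\bigl[\exp\bigl(\tfrac1p\int|u+\ph|^p\bigr)\1_{\|u+\ph\|_{L^2}^2\le K}\bigr]},
\]
and the paper simply declares this to be $\eqref{HessianLB}$. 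In other words, $\E_{\rho_\ph}[|u|^{p-2}]$ in the statement should be read as the expectation of the \emph{full} field $u+\ph$ against the density in \eqref{rhophdef} (equivalently, $\rho_\ph$ is tacitly the Cameron--Martin shift of the measure you write). Your natural computation already produces this quantity, so there is nothing to ``convert.'' The alternative Cameron--Martin identity and the Gaussian integration by parts you propose are neither needed nor used.

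Second, the factor $(p-1)$. You correctly compute, for complex $v$ and $w$,
\[
  D^2\Phi(v)[w,w] = (p-2)|v|^{p-4}\bigl(\Re(v\bar w)\bigr)^2 + |v|^{p-2}|w|^2,
\]
whose eigenvalues are $(p-1)|v|^{p-2}$ in the $v$-direction and $|v|^{p-2}$ perpendicular. Since $p>4>2$, the cross term is non-negative and the sharp \emph{pointwise lower} bound over all complex $w$ is $D^2\Phi(v)[w,w]\ge |v|^{p-2}|w|^2$, with coefficient $1$, not $(p-1)$. The value $(p-1)$ is the eigenvalue in the aligned direction (equivalently the real scalar case), and that is what the paper writes; the exact constant plays no role downstream, since only the divergence of $\E_{\rho_\ph}\bigl[\int_\T|u|^{p-2}\bigr]$ is used in Theorem \ref{thm:noLS}. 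You should therefore have finished the proof with coefficient $1$ (or noted the stronger $(p-1)$ holds when $w$ is aligned with the field, e.g.\ for the real-valued test functions used later), rather than reaching for an unproved alternative identity. As written, your proposal stops one elementary step short of completion, for reasons that are notational rather than substantive.
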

\begin{proof}
    By Cameron-Martin's theorem
    (see e.g.\ \cite[Corollary 2.4.3]{Bogachev}), we may write that 
    \begin{align*}
         V(\ph) = \log\bigg(\E_\mu\bigg[ \exp\Big( \frac 1 p \int_\T |u|^ p +  \jb{u, (1-\Delta) \ph} -\frac12 \| \ph \|_{H^1}^2 \Big) \1_{\| u\|_{L^2}^2 \le K}\bigg]\bigg).
    \end{align*}
    In particular, we have that $V$ is smooth as a function of $\ph$, and 
    by computation similar to 
    \eqref{HessH} and the argument in the proof of 
    Lemma~\ref{NRconvergenceweak}, we have
    \begin{equation*}
        \H V(\ph) = \lim_{R \to \infty} \H V_{R} (\ph), 
    \end{equation*}
    where 
    $$ V_{R}(\ph)
    = \log \bigg(\E_\mu \bigg[ \exp\Big(\frac 1p \int_\T |u+ \ph|^p dx -R \big(\| u + \ph \|_{L^2}^2 - K\big)_+^{\s}\Big)\bigg]\bigg),$$
    and $\s > \frac p2 + 1$.
    For an appropriately integrable functional $F(\ph)$, by the chain rule we have that 
    \begin{align*}
        \nabla \log\big(\E_\mu[\exp(F(\ph))]\big) = \frac{\E_\mu[\nabla F \exp(F(\ph))]}{\E_\mu[\exp(F(\ph))]}
    \end{align*}
    and so 
    \begin{align*}
        &\H \log\big(\E_\mu[\exp(F(\ph))]\big)[w,w] \\
        &= \frac{\E_\mu[\H F[w,w] \exp(F(\ph))]}{\E_\mu[\exp(F(\ph))]} + \frac{\E_\mu[|\nabla F \cdot w|^2 \exp(F(\ph))]}{\E_\mu[\exp(F(\ph))]}   \\
      &\quad  - \Big| \frac{\E_\mu[\nabla F \cdot w \exp(F(\ph))]}{\E_\mu[\exp(F(\ph))]} \Big|^2 \\
        &\ge \frac{\E_\mu[\H F[w,w] \exp(F(\ph))]}{\E_\mu[\exp(F(\ph))]},
    \end{align*}
    where we used Jensen's inequality for probability  measure 
    $$ \frac{d\nu}{d\mu} = \frac{\exp(F(\ph))}{\E_\mu[\exp(F(\ph))]} $$
    in the last step. 
    Applying to $V_R$, we get 
    \begin{align*}
        &\mathcal{H}(V_R)(\ph)[w,w] \\
        &\ge 
        (p-1)   
        \frac{\E_\mu\Big[ \big( \int_\T |u+\ph|^{p-2} |w|^2 \big) \exp\Big(\frac 1p \int_\T |u+ \ph|^p-R \big(\| u + \ph \|_{L^2}^2 - K\big)_+^{\s}\Big) \Big]}
        {\E_\mu \Big[ \exp\Big(\frac 1p \int_\T |u+ \ph|^p-R \big(\| u + \ph \|_{L^2}^2 - K\big)_+^{\s}\Big) \Big]}\\
        &\phantom{\ge\ }-   
        \frac{\E_\mu\Big[ \big( R \H \big(\| u + \ph \|_{L^2}^2 - K\big)_+^{\s} [w,w] \big) \exp\Big(\frac 1p \int_\T |u+ \ph|^p-R \big(\| u + \ph \|_{L^2}^2 - K\big)_+^{\s}\Big) \Big]}
        {\E_\mu \Big[ \exp\Big(\frac 1p \int_\T |u+ \ph|^p-R \big(\| u + \ph \|_{L^2}^2 - K\big)_+^{\s}\Big) \Big]}.
    \end{align*}
    By taking limits as $R \to \infty,$ we deduce 
    \begin{align*}
        &\mathcal{H}(V)(\ph)[w,w] \\
        &\ge (p-1)  
        \frac{\E \Big[ \big( \int_\T |u+\ph|^{p-2} |w|^2 \big) \exp\Big(\frac 1p \int_\T |u+ \ph|^p\Big)\1_{ \| u + \ph \|_{L^2}^2 \le K} \Big]}
        {\E \Big[ \exp\Big(\frac 1p \int_\T |u+ \ph|^p\Big)\1_{ \| u + \ph \|_{L^2}^2 \le K} \Big]}, 
    \end{align*}
    which is \eqref{HessianLB}.
\end{proof}

\begin{lemma} \label{rhoNconvergence}
    Let $2\le p < 6$. For every $\ph \in H^1(\T)$, we have that $ V(\ph) < \infty.$ 
    Moreover, for $N \in \N \cup \{ \infty \}$ dyadic, $L > 0$, 
    define $\rho_{N,L,\ph}$ to be the probability measure 
    \begin{equation} 
    \begin{multlined}
     d \rho_{N,L,\ph}
        = \frac{1}{Z_{{N,L,\ph}}} \exp\Big( \frac1p 
        \int_\T |P_N(u+\ph)|^p dx
        \1_{\|P_N(u+\ph)\|_{L^2}^2 \le K}  \\
        \hspace{100pt}- L \1_{\|P_N(u+\ph)\|_{L^2}^2 > K}\Big) d \mu,
        \end{multlined}
        \label{rhoNLphi}
    \end{equation}
    where $Z_{{N,L,\ph}}$ is an appropriate normalisation constant. Then for every 
    $q < \infty$, we have that
    \begin{equation} \label{uqconvergence}
        \lim_{N \to \infty} \E_{\rho_{N,L,\ph}}\Big[ \1_{\|u\|_{L^2}^2 \le K} \int_\T |u|^q dx \Big] = \E_{\rho_{\infty,L,\ph}}\Big[ \1_{\|u\|_{L^2}^2 \le K} \int_\T |u|^q dx \Big],
    \end{equation}
    and
    \begin{equation}\label{Vphiconvergence}
        V(\ph) = \lim_{N \to \infty, L \to \infty} \log Z_{{N,L,\ph}}.
    \end{equation}
\end{lemma}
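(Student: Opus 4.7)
The strategy rests on two ingredients: a Cameron-Martin shift that peels off the translation by $\ph$, and the uniform-in-$N$ integrability bound for the $L^2$-cutoff focusing Gibbs density from \cite{TolWeb} (already invoked in the proof of Lemma~\ref{NRconvergenceweak}). All three claims will reduce to pointwise convergence combined with a uniform $L^q(\mu)$ bound, after which Vitali's and monotone convergence finish the job.

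To prove $V(\ph)<\infty$, I would apply Cameron-Martin (see e.g.\ \cite[Corollary 2.4.3]{Bogachev}) to rewrite
\[
    e^{V(\ph)} = e^{-\frac12\|\ph\|_{H^1}^2}\,\E_\mu\Big[\exp\Big(\tfrac1p\int_\T|u|^p\,dx + \jb{u,(1-\Dl)\ph}\Big)\1_{\|u\|_{L^2}^2\le K}\Big],
\]
and then use Cauchy-Schwarz to split the integrand into a focusing Gibbs factor $\E_\mu[\exp(\tfrac2p\int|u|^p)\1_{\|u\|_{L^2}^2\le K}]^{1/2}$, finite for all $p<6$ by \cite[Proposition 3.1]{TolWeb}, and a Gaussian exponential moment of the linear functional $\jb{u,(1-\Dl)\ph}$, finite for every $\ph\in H^1(\T)$.

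For the $N$-limit in \eqref{uqconvergence} at fixed $L$, let $f_N$ denote the Radon-Nikodym density of $\rho_{N,L,\ph}$ with respect to $\mu$. I would first establish pointwise $\mu$-a.s.\ convergence $f_N\to f_\infty$: \eqref{PNconvergence} yields $\int|P_N(u+\ph)|^p\to\int|u+\ph|^p$ and $\|P_N(u+\ph)\|_{L^2}^2\to\|u+\ph\|_{L^2}^2$, while Lemma~\ref{0sphere} applied to the Cameron-Martin shifted Gaussian rules out the level set $\{\|u+\ph\|_{L^2}^2=K\}$. The crux is then the \emph{uniform} bound $\sup_N \E_\mu[f_N^q]<\infty$ for every $q<\infty$: performing the same Cameron-Martin shift with $P_N$ inserted reduces this to $\sup_N\E_\mu[\exp(\tfrac{\kappa}{p}\int|P_Nv|^p)\1_{\|P_Nv\|_{L^2}^2\le K}]<\infty$ for each $\kappa<\infty$, which is precisely the content of \cite[Proposition 3.1]{TolWeb}. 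Vitali's theorem then upgrades pointwise convergence to $L^1(\mu)$, delivering $Z_{N,L,\ph}\to Z_{\infty,L,\ph}$, and by H\"older against the observable $G(u)=\1_{\|u\|_{L^2}^2\le K}\int_\T|u|^q\,dx$, which belongs to every $L^r(\mu)$ by Lemma~\ref{LEM:GaussInt}, the convergence \eqref{uqconvergence} follows.

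For \eqref{Vphiconvergence} I would proceed iteratively: at fixed $L$ the previous step gives $Z_{N,L,\ph}\to Z_{\infty,L,\ph}$, and then $L\to\infty$ yields $Z_{\infty,L,\ph}\downarrow e^{V(\ph)}$ by monotone convergence, since $\exp(-L\1_{\|u+\ph\|_{L^2}^2>K})$ decreases pointwise to $\1_{\|u+\ph\|_{L^2}^2\le K}$ and is dominated by its value at $L=0$, whose $\mu$-integrability is a byproduct of Step~1. The main obstacle throughout is the uniform-in-$N$ focusing Gibbs estimate, since the $P_N$ projection inside the mass cutoff could in principle spoil the integrability inherited from the Lebowitz-Rose-Speer construction; the rest is routine dominated and monotone convergence once the translation has been absorbed by the Cameron-Martin change of measure.
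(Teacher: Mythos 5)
Your proposal is correct, and the overall architecture matches the paper's: establish $\mu$-a.s.\ pointwise convergence of the densities via \eqref{PNconvergence} and Lemma~\ref{0sphere}, reduce all three claims to a uniform-in-$(N,L)$ integrability bound for the focusing weight, and close with \cite[Proposition 3.1]{TolWeb}. The one place where you take a different route is the treatment of the translation by $\ph$. The paper disposes of $\ph$ by elementary means: since $\ph\in H^1\subset L^p$, the triangle inequality gives $\int_\T |P_N(u+\ph)|^p\lesssim \int_\T|P_N u|^p+\|\ph\|_{L^p}^p$ and $\{\|P_N(u+\ph)\|_{L^2}^2\le K\}\subset\{\|P_N u\|_{L^2}^2\le \widetilde K\}$ with $\widetilde K$ depending only on $K$ and $\|\ph\|_{L^2}$, so the shifted exponential moment is directly dominated by an unshifted one with an enlarged cutoff, to which \cite[Proposition 3.1]{TolWeb} applies. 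You instead invoke the Cameron--Martin shift to remove $\ph$ exactly and then H\"older to split off the Gaussian exponential moment $\jb{u,(1-\Delta)\ph}$. Both routes are valid; the paper's is slightly more elementary here (no change of measure, no need to split exponents), whereas your Cameron--Martin argument is more systematic and is in fact the tool the paper itself deploys later in the proof of the Hessian lower bound. Two minor bookkeeping points worth making explicit: (a) for the a.s.\ convergence you need $\mu(\{\|u+\ph\|_{L^2}^2=K\})=0$, which as you note follows from Lemma~\ref{0sphere} together with the absolute continuity $\mu(\cdot-\ph)\ll\mu$; and (b) in the uniform bound $\sup_N\E_\mu[f_N^q]<\infty$ you also need $Z_{N,L,\ph}$ bounded below uniformly in $N$, which holds trivially since $Z_{N,L,\ph}\ge e^{-L}$ at fixed $L$.
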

\begin{proof}
    By \eqref{PNconvergence}, we have that for every $1 < p < \infty$,
    $$ P_N(u + \ph) \to u + \ph   $$
    in $L^p(\T)$. 
    Therefore, from Lemma \ref{0sphere}, we have that 
        \begin{equation*}
    \begin{multlined}
        \exp\Big(\frac 1p \int_\T |P_N(u+ \ph)|^p dx \1_{ \| P_N(u+ \ph) \|_{L^2}^2 \le K} - L \1_{ \| P_N(u+ \ph) \|_{L^2}^2 > K}\Big) \\
        \to \exp\Big(\frac 1p \int_\T |u+ \ph|^p dx 
        \1_{ \| u+ \ph \|_{L^2}^2 \le K}- L \1_{ \| u+ \ph \|_{L^2}^2 > K}\Big) 
    \end{multlined}
    \end{equation*}
    $\mu$-a.s.\ as $N \to \infty$, and also from Lemma \ref{0sphere}, we have
    \begin{equation*}
    \begin{multlined}
        \exp\Big(\frac 1p \int_\T |P_N(u+ \ph)|^p dx \1_{ \| P_N(u+ \ph) \|_{L^2}^2 \le K} - L \1_{ \| P_N(u+ \ph) \|_{L^2}^2 > K}\Big) \\
        \to \exp\Big(\frac 1p \int_\T |u+ \ph|^p dx \Big) \1_{ \| u+ \ph \|_{L^2}^2 \le K}
    \end{multlined}
    \end{equation*}
    $\mu$-a.s.\ as $N,L \to \infty$. Recalling that 
    $$ \E_{\mu} [|u|^q] \les_q 1$$
    for every $q< \infty$, for both \eqref{uqconvergence} and \eqref{Vphiconvergence}, it is enough to show that for any $q > 1$, 
\begin{align*}
\sup_{N,L \in \N} &\E_{\mu}\Big[ \exp\Big(\frac qp \int_\T |P_N(u+ \ph)|^p dx \1_{ \| P_N(u+ \ph) \|_{L^2}^2 \le K}- qL \1_{ \| P_N(u+ \ph) \|_{L^2}^2 > K}\Big)\Big) \Big] \\
& \le \sup_{N \in \N} \E_{\mu}\Big[ \exp\Big(\frac qp \int_\T |P_N(u+ \ph)|^p dx \1_{ \| P_N(u+ \ph) \|_{L^2}^2 \le K}\Big) \Big] < \infty. 
\end{align*}

\noi
Since $\ph \in H^1$, because of the Sobolev embedding $H^1 \subset L^p$, it is enough to show that for every $\beta > 0$, $\wt K > 0$, we have that 
        \begin{equation*}
         \E_{\mu}\Big[ \exp\Big( \beta \int_\T |u|^p dx \1_{ \| u \|_{L^2}^2 \le \wt K}\Big) \Big] < \infty. 
    \end{equation*}
    This however follows from \cite[Proposition 3.1]{TolWeb}.
\end{proof}
\begin{lemma} \label{lem:bigLp-2}
Let $4 < p < 6$. 
Let $M\ge 1$ be a dyadic number, and suppose that $\ph \in H^1(\T) $ satisfies 
\begin{align*}
\| \ph \|_{H^1} \les M, \quad
\| \ph \|_{L^2}^2 \le K/2,  \quad
    \| \ph \|_{L^{p-2}}^{p-2} \gtrsim M^{\frac p2 - 2},
\end{align*}

\noi
and
\begin{align}
    \|\ph\|_{L^p}^p \sim M^{\frac p2 - 1}.
    \label{phLp}
\end{align}

\noi
Let $0 < \eps_0 \ll 1$, $N \ge N_0(\ph)$ be big enough, and let $\Theta_1$ be a $1$-optimiser for the expression
\begin{equation}\label{logZphidef}
\begin{aligned}
&\log \wt{Z_{N,M,\ph}} \\
&= \sup_{\Theta \in \HH}
\E\bigg[ \frac 1 p\int_\T |P_N Y_1 + P_N \ph + P_N \Theta(1)|^{p} dx 
\1_{\| P_N Y_1 + P_N \ph + P_N \Theta(1) \|_{L^2}^2 \le K}\\
&\phantom{= \sup \E\bigg[ ]} - \eps _0 M^{\frac{p}2-1} \ind_{\| P_N Y_1 + P_N \ph + P_N \Theta(1) \|_{L^2}^2 > K}\\
&\phantom{= \sup \E\bigg[ ]}
- \frac{1}{2} \int_0^1 \| \dot\Theta(t) \|_{H^1_x}^2 dt \bigg],
\end{aligned}
\end{equation}

\noi
where $Y_1$ is defined as in
\eqref{P2}.
Then 
\begin{equation} \label{logZphlb}
   \log \wt{Z_{N,M,\ph}} \gtrsim M^{\frac p2 -1} 
\end{equation}

\noi
and 
\begin{equation} 
\label{largeLp-2}
\E\Big[\int_\T |P_N \ph + P_N \Theta_1(1)|^{p-2} dx \ind_{\| P_N Y_1 + P_N \ph + P_N \Theta_1(1) \|_{L^2}^2 \le K}\Big] 
\gtrsim M^{\frac p2 - 2}.  
\end{equation}

\end{lemma}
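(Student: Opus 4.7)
The idea is to derive both inequalities directly from the variational formula \eqref{logZphidef}, using the trivial drift $\Theta \equiv 0$ for part (i) and exploiting the $1$-optimality of $\Theta_1$, together with Gagliardo--Nirenberg control on $L^\infty$ norms, for part (ii).

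For \eqref{logZphlb}, I test \eqref{logZphidef} with $\Theta \equiv 0$. The elementary inequality $|a+b|^p \ge 2^{1-p}|a|^p - |b|^p$ (obtained from $|a|^p \le 2^{p-1}(|a+b|^p + |b|^p)$) applied pointwise with $a = P_N\ph$ and $b = P_NY_1$ yields
\[
\int_\T |P_N(Y_1+\ph)|^p\, dx \;\ge\; 2^{1-p}\,\|P_N\ph\|_{L^p}^p - \|P_NY_1\|_{L^p}^p.
\]
For $N \ge N_0(\ph)$, the first term is comparable to $\|\ph\|_{L^p}^p \sim M^{p/2-1}$ via \eqref{PNconvergence}, while $\E\|P_NY_1\|_{L^p}^p = O(1)$. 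Since $\|\ph\|_{L^2}^2 \le K/2$, a Chebyshev argument on $\|P_NY_1\|_{L^2}^2$ shows that the cutoff event $A_0 = \{\|P_N(Y_1+\ph)\|_{L^2}^2 \le K\}$ has probability bounded below by some $c(K)>0$; restricting the positive integrand to $A_0 \cap \{\|P_NY_1\|_{L^p}^p \ll M^{p/2-1}\}$ still leaves a contribution $\gtrsim M^{p/2-1}$. The penalty $-\eps_0 M^{p/2-1}\P(A_0^c)$ is at worst $-\eps_0 M^{p/2-1}$, so choosing $\eps_0$ small makes the positive part dominate and gives \eqref{logZphlb}.

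For \eqref{largeLp-2}, set $u_1 = P_N(Y_1 + \ph + \Theta_1(1))$ and $A_1 = \{\|u_1\|_{L^2}^2 \le K\}$. By $1$-optimality and part (i),
\[
\E\Big[\tfrac{1}{p}\int|u_1|^p\,\ind_{A_1}\Big] - \tfrac{1}{2}\E\|\dot\Theta_1\|_{L^2H^1}^2 \;\gtrsim\; M^{p/2-1},
\]
so in particular $\E[\int|u_1|^p\ind_{A_1}] \gtrsim M^{p/2-1}$. The next step is to bound the cost $\mathrm{cost} := \E\|\dot\Theta_1\|_{L^2H^1}^2$. By Gagliardo--Nirenberg on $\T$, $\|\ph\|_{L^\infty}^2 \le C\|\ph\|_{L^2}\|\ph\|_{H^1} \lesssim M$, so $\|\ph\|_{L^\infty} \lesssim \sqrt M$. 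On $A_1$, $\int|u_1|^p \le K\|u_1\|_{L^\infty}^{p-2}$, and G--N applied to $P_N\Theta_1(1)$ (using that $\|P_N\Theta_1(1)\|_{L^2}$ is controlled on $A_1$ by $\sqrt K + \|P_NY_1\|_{L^2}$) converts $\|\Theta_1(1)\|_{L^\infty}^{p-2}$ into a product of $L^2$ and $H^1$ factors; Young's inequality with conjugate exponent $s = 4/(p-2) > 1$ (valid precisely for $p < 6$) absorbs the $\|\Theta_1(1)\|_{H^1}^2$ factor into the cost. This produces
\[
\E[V(u_1)] \;\le\; CM^{p/2-1} + \tfrac{1}{4}\,\mathrm{cost} + O(1),
\]
which, combined with the variational upper bound $\E[V(u_1)] - \tfrac12\mathrm{cost} \le \log \wt{Z_{N,M,\ph}}$ and part (i), gives $\mathrm{cost} \lesssim M^{p/2-1}$.

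With the cost controlled, Markov confines the dynamics to a high-probability event $E$ on which $\|\dot\Theta_1\|_{L^2H^1}^2 \le CM^{p/2-1}$, whence G--N yields $\|\Theta_1(1)\|_{L^\infty}^2 \lesssim M^{p/4-1/2} \ll M$. Taking $N$ at most polynomial in $M$, Gaussian concentration gives $\|P_NY_1\|_{L^\infty}^2 \lesssim \log N \lesssim M$, so $\|u_1\|_{L^\infty}^2 \lesssim M$ on $E$. On $E \cap A_1$, the H\"older bound $\|u_1\|_{L^{p-2}}^{p-2} \ge \|u_1\|_{L^p}^p/\|u_1\|_{L^\infty}^2$ gives $\int|u_1|^{p-2}\,dx \ge (CM)^{-1}\int|u_1|^p\,dx$, and a truncation/tail argument (using the cost bound and the Chebyshev tail of $\|u_1\|_{L^\infty}^2$) shows that $\E[\int|u_1|^p\ind_{A_1\setminus E}]$ is subleading, so $\E[\int|u_1|^{p-2}\ind_{A_1}] \gtrsim M^{p/2-2}$. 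Finally, since $P_N\ph + P_N\Theta_1(1) = u_1 - P_NY_1$, applying $|a-b|^{p-2} \ge 2^{3-p}|a|^{p-2} - |b|^{p-2}$ with $a = u_1$, $b = P_NY_1$, together with the uniform bound $\E\int|P_NY_1|^{p-2}\,dx = O(1)$, produces \eqref{largeLp-2}.

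The main obstacle is the transition from the second-moment cost bound $\mathrm{cost} \lesssim M^{p/2-1}$ to the pointwise $L^\infty$ control of $u_1$ on most of probability space: since $1$-optimality does not by itself deliver Gaussian concentration for $\|\dot\Theta_1\|^2$, the Markov restriction to $E$ has to be paired with a careful moment/truncation estimate to show that the bulk of $\int|u_1|^p$ really sits on $E$, and not on the tail event where $\|u_1\|_{L^\infty}$ is large.
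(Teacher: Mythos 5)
Your treatment of \eqref{logZphlb} is essentially the paper's (test with $\Theta\equiv 0$, note $\P(\|P_N(Y_1+\ph)\|_{L^2}^2\le K)\ge c(K)>0$, choose $\eps_0$ small); the paper states it even more tersely. The problem is with your argument for \eqref{largeLp-2}, which contains a genuine gap that cannot be patched without abandoning the strategy.

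Your route to \eqref{largeLp-2} passes through $L^\infty$ bounds on $u_1 = P_N(Y_1+\ph+\Theta_1(1))$: you bound $\int|u_1|^p$ on $A_1$ by $K\|u_1\|_{L^\infty}^{p-2}$, and you later divide $\|u_1\|_{L^p}^p$ by $\|u_1\|_{L^\infty}^2$ to extract $\|u_1\|_{L^{p-2}}^{p-2}$. Both steps need control of $\|P_N Y_1\|_{L^\infty}$, but $\E\|P_N Y_1\|_{L^\infty}^q\sim (\log N)^{q/2}$ diverges as $N\to\infty$. You acknowledge this by imposing ``$N$ at most polynomial in $M$'', but that restriction is not available: the lemma is applied in the proof of Theorem \ref{thm:noLS} with $\ph$ (hence $M$) fixed and $N\to\infty$, so \eqref{largeLp-2} must hold uniformly in $N$ for each fixed $M$. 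Relatedly, you never use the hypothesis $\|\ph\|_{L^{p-2}}^{p-2}\gtrsim M^{p/2-2}$, which the paper's proof relies on in an essential way; that unused hypothesis is a red flag. Finally, you yourself identify the passage from the second-moment cost bound to concentration of $\int|u_1|^p$ on the good event $E$ as ``the main obstacle'' and leave it unproved, so the argument is incomplete even under your extra restriction on $N$.

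The paper proves \eqref{largeLp-2} by contradiction, entirely within $L^q$ norms for $q<\infty$ (for which $\E\|P_NY_1\|_{L^q}^q$ is bounded uniformly in $N$). Supposing $\E[\int|P_N\ph+P_N\Theta_1(1)|^{p-2}\,\1_{E_<}]\le\dl_0 M^{p/2-2}$, the bulk of $E_<$ lies in the event $G_<$ where $\int|P_N\ph+P_N\Theta_1(1)|^{p-2}\le\eps_0 M^{p/2-2}$; there, since $\|\ph\|_{L^{p-2}}^{p-2}\gtrsim M^{p/2-2}$, the triangle inequality forces $\|P_N\Theta_1(1)\|_{L^{p-2}}^{p-2}\gtrsim M^{p/2-2}$, and the Gagliardo--Nirenberg inequality $\|v\|_{L^{p-2}}^{p-2}\lesssim \eps M^{\frac{2p-8}{8-p}}(1+\|P_NY_1\|_{L^2})^{\frac{2p}{8-p}}+M^{-2}\|v\|_{H^1}^2$ then pushes $\|P_N\Theta_1(1)\|_{H^1}^2\gtrsim M^2$ on $G_<$, which is $\gg M^{p/2-1}$ for $p<6$. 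Feeding this prohibitive drift cost, together with the Gagliardo--Nirenberg bound $\|v\|_{L^p}^p\le\frac14\|v\|_{H^1}^2+C\|v\|_{L^{p-2}}^{p+2}$ on $G_<$ and the penalty on $E_>$, into \eqref{logZphidef} contradicts \eqref{logZphlb}. If you want to salvage your plan, the key fix is to replace every use of $\|u_1\|_{L^\infty}$ with the interpolation inequalities between $L^2$, $L^{p-2}$, $L^p$, and $H^1$ that the paper uses, and to build in the $L^{p-2}$ lower bound on $\ph$ as the mechanism that detects whether $\Theta_1$ is expensive.
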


\begin{proof}

First of all, by choosing $\Theta \equiv 0$, we have from \eqref{phLp} that 
\begin{equation*} 
        \log \wt{Z_{N,M,\ph}} \gtrsim M^{\frac p 2 - 1},
    \end{equation*}  
    which is \eqref{logZphlb}. Now suppose by contradiction that for some constant $\dl_0 \ll 1$ small enough (to be determined later),
    \begin{equation} \label{contradiction}
        \E\Big[\int_\T |P_N \ph + P_N \Theta_1(1)|^{p-2} dx \ind_{\| P_N Y_1 + P_N \ph + P_N \Theta_1(1) \|_{L^2}^2 \le K}\Big] \le  \dl_0 M^{\frac p2 - 2}.
    \end{equation}  
    By Gagliardo-Nirenberg, we have that 
    \begin{align*}
        \| P_N \Theta_1(1) \|_{L^p}^p \les \| P_N \Theta_1(1) \|_{L^2}^{\frac p2 +1 }\| P_N \Theta_1(1) \|_{H^1}^{\frac p2 - 1}. 
    \end{align*}
    Define the set 
    $$E_< := \{\| P_N Y_1 + P_N \ph + P_N \Theta_1(1) \|_{L^2}^2 \le K\},  $$
    and let $E_>$ be its complementary. On $E_<$,
    for some constant $C = C(p,K)$, by Young's inequality we have that 
    \begin{align*}
        \| P_N \Theta_1(1) \|_{L^p}^p \le C (1 + \| P_N Y_1\|_{L^2})^\frac{2p+4}{6-p} + \frac 18 \| P_N \Theta_1(1) \|_{H^1}^2, 
    \end{align*}
    which implies that 
    \begin{equation} \label{Lpintegrability}
    \begin{aligned}
        &\frac 1 p\int_\T |P_N Y_1 + P_N \ph + P_N \Theta_1(1)|^{p} dx \1_{E_<} \\
        &\le C \big(1+ \| P_N Y_1 \|_{L^p}^p + \| \ph\|_{L^p}^p + \| P_N Y_1\|_{L^2}^\frac{2p+4}{6-p}\big)\1_{E_<} \\
        & \qquad + \frac 18 \| P_N \Theta_1(1) \|_{H^1}^2 \1_{E_<} 
    \end{aligned}
    \end{equation}
    and proceeding similarly with $p-2$ instead of $p$, we have 
    \begin{align}
    \begin{aligned}
        \| P_N \Theta_1(1) \|_{L^{p-2}}^{p-2} & \le \eps_0 M^\frac{2p-8}{8-p} (1 + \| P_N Y_1\|_{L^2})^\frac{2p}{8-p} \\
        & \qquad + C M^{-2} \| P_N \Theta_1(1) \|_{H^1}^2. 
        \end{aligned}
        \label{A1}
    \end{align}
    Since $\| \ph \|_{L^{p-2}}^{p-2} \gtrsim M^{\frac {p}2 - 2 }$, when $N \ge N_0 (\ph)$ is big enough, on the set 
    $$ G_{<} := \{ \int_\T |P_N \ph + P_N \Theta_1(1)|^{p-2} 
    dx \le \eps_0 M^{\frac{p}2-2} \} \cap E_{<},  $$
    we must have that $\| \Theta_1(1)\|_{L^{p-2}}^{p-2} \gtrsim M^{\frac{p}2-2} $ as well,  and so for $M$ big enough, we have
    from \eqref{A1} that 
    \begin{align} 
    \E \Big[\| P_N \Theta_1(1) \|_{H^1}^2 \1_{G_<}\Big] \ge C M^{2} \P(G_<) - C \eps_0 M^\frac{2p-8}{8-p}
    \label{H1}
    \end{align}
    provided that $4< p < 6$. Moreover, by Gagliardo-Nirenberg, we have that 
    \begin{align} \|u\|_{L^p}^p \les \| u \|_{H^1}^{\frac 4 p } \|u\|_{L^{p-2}}^\frac{p^2 - 4}{p} \le \frac 14 \| u \|_{H^1}^2 + C \|u\|_{L^{p-2}}^{p+2}.
    \label{pnorm}
    \end{align}
    In particular, by \eqref{pnorm} and the definition of $G_<$,  
    we obtain that 
\begin{equation}
\label{G>bound}
    \begin{aligned}
  \E\Big[\int_\T |P_N \ph + P_N \Theta_1(1)|^{p} dx \1_{G_<}\Big]  
 & \le \frac 14 \E \Big[\| P_N \Theta_1(1) \|_{H^1}^2 \1_{G_<}\Big] \\
&\quad + C M^{\frac{(p+2)(p-4)}{2(p-2)}} \P(G_<).
\end{aligned}
\end{equation}

\noi
    Since $2 < p < 6$, note that 
    $$ \frac{(p+2)(p-4)}{2(p-2)} < 2.$$
    Note also that by \eqref{contradiction}, we must have that
    \begin{equation} \label{tiny_set}
        \P(E_<\setminus G_<) \le \dl := \min\big(\frac{\dl_0}{\eps_0 \P(E_<)},\P(E_<)\big) \le \sqrt{\frac{\dl_0}{\eps_0}},
    \end{equation}


\noi
and so
    $$\P(G_<) + \P(E_>) \ge 1 - \dl.$$
    Therefore, by \eqref{logZphidef}, \eqref{logZphlb}, \eqref{contradiction}, \eqref{Lpintegrability}, \eqref{G>bound}, we deduce that 
    \begin{align*}
       &\log \wt{Z_{N,M,\ph}} \\
        & 
        \begin{multlined}
           \le 1 + \E\bigg[ \frac 1 p\int_\T |P_N Y_1 + P_N \ph + P_N \Theta_1(1)|^{p} dx \ind_{\| P_N Y_1 + P_N \ph + P_N \Theta_1(1) \|_{L^2}^2 \le K}\\
    \hspace{40pt}- \eps _0 M^{\frac{p}2-1} \ind_{\| P_N Y_1 + P_N \ph + P_N \Theta_1(1) \|_{L^2}^2 > K}
    - \frac{1}{2} \int_0^1 \| \dot\Theta_1(t) \|_{H^1_x}^2 dt \bigg].
        \end{multlined} \\
        &\begin{multlined}
            \le C(1 + M^{\frac p2 - 1} \P(E_< \setminus G_<) + \eps_0 M^\frac{2p-8}{8-p}) \\
            \hspace{80pt}+  \E \Big[- \eps_0 M^{\frac p2 - 1} \1_{E_>} - \frac 18 \| P_N \Theta_1(1) \|_{H^1}^2  \Big]
        \end{multlined} \\
        &\le 2C \dl M^{\frac p2 -1} - \eps_0 (\P(G_<) + \P(E_>))M^{\frac p2 -1}, 
    \end{align*}
    which is a contradiction with \eqref{logZphlb} when $\dl \ll 1$ is small enough, which in view of \eqref{tiny_set}, is the case as soon as $\dl_0$ in \eqref{contradiction} is small enough.

\end{proof}

\begin{proof}[Proof of Theorem \ref{thm:noLS}]
    Suppose that $\ph_M$ satisfies the hypotheses of Lemma \ref{lem:bigLp-2}. 
    Such a family of $\ph_M$ can be built (for instance) by choosing $\ph_1$ to be a nonnegative smooth function with compact support with $\| \ph_1 \|_{L^{2}}^2 \le K/2$, and 
    $$ \ph_M(x) = M^{\frac 12} \ph_1(Mx).$$
    Recall the probability measures $\rho_{N,L,\ph}$ introduced in \eqref{rhoNLphi}. With this notation, we have that 
    \begin{align} \label{rhocomplicated}
    \begin{aligned}
      & d\rho_{N,\eps_0M^{\frac p2-1},\ph} \\
       &    \quad \quad  := \frac{1}{\wt{Z_{N,M,\ph}}}\exp\Big(\frac 1p \int_\T |P_N(u+\ph)|^p dx \1_{\| P_N(u+\ph) \|_{L^2}^2 \le K}  \\
  &    \quad \quad     \quad \quad    \quad \quad     \quad \quad     \quad \quad
  - \eps_0 M^{\frac p2 -1}\1_{\| P_N(u+\ph) \|_{L^2}^2 > K} \Big) d \mu(P_Nu),
    \end{aligned}
    \end{align}

\noi
and analogously 
\begin{equation}
\begin{multlined}
d\rho_{\infty,\eps_0M^{\frac p2-1},\ph} \\
        = \frac{1}{\wt{Z_{M,\ph}}}\exp\Big(\frac 1p \int_\T |u+\ph|^p dx \1_{\| u+\ph \|_{L^2}^2 \le K} \\
        \phantom{XXXXXXXXX}- \eps_0 M^{\frac p2 -1}\1_{\| u+\ph \|_{L^2}^2 > K} \Big) d \mu(u).
        \end{multlined}
    \end{equation}

\noi
    Note that the definition of $\wt{Z_{N,M,\ph}}$ in \eqref{rhocomplicated} coincides with the expression \eqref{logZphidef} in view of Lemma \ref{LEM:var3}.

    Then, by Lemma \ref{optTVconvergence}, \eqref{largeLp-2}, and monotone convergence theorem, we have that 
    \begin{equation*}
        \E_{\rho_{N,\eps_0M^{\frac p2-1},\ph}}\Big[\int_\T |u|^{p-2} dx \1_{\|u\|_{L^2}^2 \le K} \Big] \gtrsim M^{\frac p2 -2} - C \E_{\mu}\big[\| u\|_{L^{p-2}}^{p-2}\big] \gtrsim  M^{\frac p2 -2}.
    \end{equation*}
    In view of \eqref{uqconvergence}, we can take limits as $N \to \infty,$ and obtain that 
    \begin{equation}\label{LBinfty}
    \E_{\rho_{\infty,\eps_0M^{\frac p2-1},\ph}}\Big[\int_\T |u|^{p-2} dx \1_{\|u\|_{L^2}^2 \le K} \Big] \gtrsim M^{\frac p2 -2} 
    \end{equation}

\noi
as well.
Therefore, we have that 
    \begin{align*}
    \E_{{\rho_\ph}}\Big[\int_\T |u|^{p-2}dx \Big] 
        &= \E_{\rho_{\infty,\eps_0M^{\frac p2-1},\ph}}\Big[\int_\T |u|^{p-2}dx  \1_{\|u\|_{L^2}^2 \le K} \Big]\frac{\wt{Z_{M,\ph}}}{Z_\ph}\\
        &\ge \E_{\rho_{\infty,\eps_0M^{\frac p2-1},\ph}}\Big[\int_\T |u|^{p-2} dx \1_{\|u\|_{L^2}^2 \le K} \Big]\\
        &\gtrsim M^{\frac p2 -2},
    \end{align*}
    where we used \eqref{LBinfty} in the last step. This shows that 
    \begin{align*}
        \sup_{\ph \in H^1} \E_{\rho_\ph}\Big[\int_\T  |u|^{p-2} 
        dx \Big] \gtrsim M^{\frac p2 -2}.
    \end{align*}
    Since $M$ is arbitrary, we obtain \eqref{Lp-2divergence}. Together with \eqref{HessianLowerBound}, we obtain \eqref{NoHope}.
\end{proof}


\begin{ackno}\rm
G.\,L. was supported by the NSFC (grant no.~12501181);
and he would also like to thank the School of Mathematics at the University of Edinburgh 
for its hospitality, where part of this manuscript was prepared, and acknowledges 
support from the European Research Council (grant no.~864138, ``SingStochDispDyn'').

\end{ackno}

\end{document}